\documentclass{amsart}

\usepackage{amssymb,mathtools}
\usepackage[numbers,sort&compress]{natbib}
\usepackage{microtype}
\usepackage{enumitem}
\usepackage[hidelinks]{hyperref}

\numberwithin{equation}{section}
\theoremstyle{plain}
\newtheorem{theorem}{Theorem}[section]

\newtheorem{lemma}[theorem]{Lemma}
\newtheorem{corollary}[theorem]{Corollary}
\theoremstyle{remark}
\newtheorem{remark}[theorem]{Remark}

\newcommand{\HH}{\mathbb H}
\newcommand{\RR}{\mathbb R}

\title[Radial solutions to a hyperbolic H\'enon equation]
{Structure of radial solutions to a H\'enon type equation with
exponential nonlinearity on the hyperbolic space}

\author{Yahui Jiang}
\address{Y. Jiang, School of Mathematical Sciences, Key Laboratory of MEA
(Ministry of Education) \& Shanghai Key Laboratory of PMMP,
East China Normal University, Shanghai 200241, People's Republic of China}
\email{52275500028@stu.ecnu.edu.cn}

\author{Xianmei Zhou}
\address{X. Zhou, School of Mathematical Sciences, Zhejiang Normal University,
Jinhua 321004, People's Republic of China}
\email{xmzhou@zjnu.edu.cn}

\thanks{Corresponding author: Xianmei Zhou.}
\subjclass[2020]{Primary 35J61; Secondary 35B35, 58J05}
\keywords{Hyperbolic space, H\'enon type equation, exponential
nonlinearity, separation, stability}

\begin{document}

\begin{abstract}
In this paper, we study the separation and stability properties of
radial solutions to a H\'enon-type equation with exponential
nonlinearity on the hyperbolic space. By transforming the radial
hyperbolic equation into a weighted Euclidean equation and applying
well-established separation results in Euclidean space, we classify the solution structures or establish sharp alternatives for radial solutions throughout the full range $n\ge2$ and $\alpha>-2$.  We also give an affirmative answer to the open question recently posed by Huang and Zhao \cite{HZ}. 
\end{abstract}

\maketitle

\section{Introduction and main results}
In this paper, we investigate the structure of radial solutions to the
H\'enon type equation with exponential nonlinearity on the hyperbolic space
\begin{equation}\label{eq1}
    -\Delta_{\HH^n}u=(\sinh r)^\alpha\exp(u)
    \quad\text{in }\HH^n,
\end{equation}
where $n\ge2$, $\alpha>-2$, $r$ denotes the geodesic distance from the origin in $\HH^n$, and $\Delta_{\HH^n}$ is the Laplace--Beltrami operator on $\HH^n$.

\medskip
A function $u$ is called a weak solution
to \eqref{eq1} if $$u\in H_{\mathrm{loc}}^1(\HH^n), \qquad (\sinh r)^{\alpha}\exp(u)\in L_{\mathrm{loc}}^1(\HH^n),$$ and \eqref{eq1} is satisfied in the weak sense, that is,
\begin{align*}\label{weak}
\int_{\HH^n}\left(
\langle\nabla_{\HH^n}u,\nabla_{\HH^n}\varphi\rangle
-(\sinh r)^\alpha\exp(u)\varphi
\right)\,dV_{\HH^n}=0,\quad\forall\varphi\in C_{c}^\infty(\HH^n).
\end{align*}
A solution $u$ of \eqref{eq1} is said to be stable if
\begin{align*}
    Q_{u}(\varphi):=\int_{\HH^n}\left(|\nabla_{\HH^n}\varphi|_{\HH^n}^2-(\sinh r)^{\alpha}\exp(u)\varphi^2\right)\,dV_{\HH^n}\ge0,\quad\forall\varphi\in C_{c}^\infty(\HH^n),
\end{align*}
where 
\begin{align*}
|\nabla_{\HH^n}\varphi|_{\HH^n}^2=\langle\nabla_{\HH^n}\varphi,\nabla_{\HH^n}\varphi\rangle_{\HH^n}=|\partial_r\varphi|^2+(\sinh r)^{-2}|\nabla_{\mathbb{S}^{n-1}}\varphi|^2.
\end{align*}
For each $\beta\in\RR$, a radial solution of \eqref{eq1} with
 $u(0)=\beta$ satisfies the  initial value
problem
\begin{equation}\label{radial1}
\begin{cases}
    u''(r)+(n-1)\coth r\,u'(r)=-(\sinh r)^{\alpha}\exp(u(r)),\quad r>0,\\
    u(0)=\beta.
    \end{cases}
\end{equation}
This problem \eqref{radial1} admits a unique solution
$u_{\beta}\in C([0,\infty))\cap C^2((0,\infty))$ satisfying
\begin{equation*}
    \lim_{r\to0^+}(\sinh r)^{n-1}u_\beta'(r)=0.
\end{equation*}
When $\alpha\geq 0$,
$u_\beta$ is a classical radial solution of \eqref{eq1}. Moreover,
$u_\beta$ is a radial weak solution of \eqref{eq1} for every
$\alpha>-2$. Indeed, integrating \eqref{radial1}, we obtain 
\begin{equation}\label{radial:flux}
    (\sinh r)^{n-1}u_\beta'(r)
    =-\int_0^r(\sinh s)^{n-1+\alpha}\exp(u_\beta(s))\,ds.
\end{equation}
Since $u_\beta$ is bounded near the origin and
$\sinh r\sim r$ as $r\to0$, it follows by \eqref{radial:flux} that
\begin{align*}
    u_\beta'(r)=O(r^{\alpha+1}),\qquad
    u_\beta(r)-\beta=O(r^{\alpha+2})
    \quad\text{as }r\to0.
\end{align*}
Thus,
$
    u_\beta\in H_{\rm loc}^1(\HH^n)$ and $
    (\sinh r)^\alpha \exp(u_\beta(r))\in L_{\rm loc}^1(\HH^n).
$

\medskip
In the Euclidean setting, the H\'enon type equation with exponential nonlinearity
\begin{equation}\label{Euc}
-\Delta u=|x|^\alpha\exp(u)
\quad\text{in }\RR^n
\end{equation}
with $n\ge2$ and $\alpha>-2$ has been extensively studied from the viewpoints of stability and
separation of solutions. For the autonomous case $(\alpha=0)$, the structure of classical radial solutions is well understood: any two distinct radial solutions intersect infinitely many times for $3\le n\le9$, whereas they separate when $n\ge10$; see, \cite{T}. Farina \cite{F} proved that \eqref{Euc} has no stable classical solution for $2\le n\le9$, while the existence of a stable radial solution for $n\ge10$ follows from the analysis performed in \cite{JL}. For the nonautonomous case $(\alpha\neq0)$, Wang and Ye \cite{WY} established a sharp Liouville theorem for stable weak solutions. More precisely, they showed that no stable weak solution exists for $2\le n<10+4\alpha$, whereas radial stable weak solutions always exist for $n\ge10+4\alpha$. It revealed that $10+4\alpha$ is the critical dimension for stability in the Euclidean setting. The structure of radial solutions to the more general equations $-\Delta u=K(|x|)\exp(u)$ has also been studied in \cite{B,BN,H}.

\medskip
Over the past decade, there has been a growing interest in the stability and separation properties of classical radial solutions to semilinear elliptic equations on the hyperbolic space $\HH^n$; see \cite{BFGR,BFG,H,H1,H2,H3,IT}. It is worth noting that Ioku and Toyoshima \cite{IT} studied the H\'enon type equation $-\Delta_{\HH^n}u=(\sinh r)^\alpha |u|^{p-1}u$ in $\HH^n$ for $n\ge2$ and $\alpha>0$. They introduced a transformation that maps radial solutions of an equation in $\HH^n$ to radial solutions of a corresponding weighted equation in $\RR^m$, without requiring $m=n$. By combining this transformation with known results in the Euclidean setting, they further obtained a detailed classification of positive radial solutions.

\medskip
For \eqref{eq1}, Huang and Zhao \cite{HZ} obtained the following stability results for classical solutions.
\begin{theorem}\cite[Theorem~1.3--1.5]{HZ}\label{HZthm}
    Let $n>2$ and $\alpha>0$.
    \begin{enumerate}[label=\textnormal{(\roman*)},leftmargin=*,nosep]
        \item If $n<1+4\alpha$, then \eqref{eq1} has no stable solution.
        \item If $n>1+4\alpha$, then there exists $\beta_{0}(n,\alpha)\in\RR$ such that $u_\beta$ is stable for $\beta\le\beta_{0}(n,\alpha)$.
        \item If $n<10+4\alpha$, then there exists a large $\beta_1$ such that $u_\beta$ is unstable for $\beta>\beta_1$; while for $n\ge10+4\alpha$, $u_\beta$ is stable for $\beta>\beta_1$.
    \end{enumerate}
\end{theorem}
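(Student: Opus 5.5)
The plan is to prove the three items separately. For (i) I would use a stability-based test-function argument; for (ii) and (iii) I would do an ODE analysis of $u_\beta$ via the Euclidean transformation.

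\emph{Item (i).} This is a Farina-type argument, which gives a bound holding independently of $\beta$. Take a stable solution $u$ and insert $\varphi=e^{\gamma u}\psi$ into $Q_u$, with $\gamma$ in the admissible range $0<\gamma<2$. Combining this with the equation tested against $e^{2\gamma u}\psi^2$ yields
\[
\int_{\HH^n}(\sinh r)^\alpha e^{(2\gamma+1)u}\psi^{2}\,dV_{\HH^n}\le C\int_{\HH^n}(\sinh r)^{-2\alpha\gamma}\bigl(|\nabla_{\HH^n}\psi|^2+|\Delta_{\HH^n}\psi^2|\bigr)^{2\gamma+1}\,dV_{\HH^n}.
\]
Now choose radial cutoffs $\psi$ supported in $\{r<2R\}$ with bounded derivatives. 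The right-hand side then grows like $\int^{2R}e^{(n-1-2\alpha\gamma)r}\,dr$. If $n-1<4\alpha$, we can pick $\gamma<2$ close to $2$ so that $n-1-2\alpha\gamma<0$. The right-hand side is then bounded uniformly in $R$, and shifting the cutoff support into annuli makes it tend to $0$. This forces $u$ to be trivial, which is impossible for the equation. The threshold $1+4\alpha$ comes out naturally here.

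\emph{Item (ii).} Pass to $v(t)=u(r(t))$ via the Ioku--Toyoshima transformation, reducing to a weighted Euclidean radial problem. I expect it is easier to work directly on $\HH^n$ with the Hardy/Poincaré inequality $\int|\nabla\varphi|^2\ge\frac{(n-1)^2}{4}\int\varphi^2$. Stability then follows once $(\sinh r)^\alpha e^{u_\beta}\le\frac{(n-1)^2}{4}$ holds pointwise. The flux identity \eqref{radial:flux} shows $u_\beta'<0$ and $u_\beta\le\beta$. For large $r$ it gives $u_\beta(r)\le\beta-c\,e^{\beta}\cdot(\text{growth})$, but that alone does not decay fast enough. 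So I would instead compare with a supersolution: when $n>1+4\alpha$, seek $w=\beta_0'-\lambda\log\cosh r$ with $-\Delta w\ge(\sinh r)^\alpha e^{w}$, which requires $\lambda(n-1)\gtrsim$ and $\lambda>\alpha$. Then $u_\beta\le w$ for small $\beta$, and a weighted Hardy inequality with weight $\sim e^{(\alpha-\lambda)r}$ handles stability. The main obstacle is matching constants at infinity: the optimal weighted Hardy constant on $\HH^n$ for decay rate $e^{-\mu r}$ is related to $(n-1)^2/4$, and the condition $n>1+4\alpha$ should emerge exactly from this.

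\emph{Item (iii).} Rescale $u_\beta(r)=\tilde u(e^{\beta/(\alpha+2)}r)-\beta$. As $\beta\to\infty$, the rescaled functions converge locally uniformly to the Euclidean radial solution of \eqref{Euc} with $\tilde u(0)=0$. If $n<10+4\alpha$, Wang--Ye gives a compactly supported test function $\varphi$ with negative Euclidean quadratic form. Scaling it back yields $Q_{u_\beta}(\varphi_\beta)<0$ for large $\beta$, since the metric is nearly Euclidean at small scales. If $n\ge10+4\alpha$, the Euclidean solution satisfies the sharp pointwise bound $|x|^\alpha e^{\tilde u}\le\frac{(n-2)^2}{4}|x|^{-2}$. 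Near the origin this transfers to $u_\beta$ for large $\beta$ by separation/comparison. Away from the origin, $u_\beta$ is very negative and the hyperbolic Hardy inequality $\int|\nabla\varphi|^2\ge\frac{(n-2)^2}{4}\int\varphi^2/\sinh^2 r$ closes the argument. The delicate step is the matching region, which requires controlling $u_\beta$ uniformly in $\beta$ on the transition scale.
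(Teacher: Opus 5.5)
\textbf{Where the argument holds and where it breaks.} The paper does not prove this theorem; it is quoted from \cite{HZ}. Its radial content is recovered, in sharper form, in Corollary~\ref{cor1}.

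Your item (i) is sound, and it goes beyond the paper's machinery because it also rules out nonradial stable solutions. The Farina estimate with $0<\gamma<2$ and $2\alpha\gamma>n-1$ is exactly where the threshold $1+4\alpha$ comes from. The instability half of (iii), via blow-up and Wang--Ye, is also fine; note the rescaling should read $u_\beta(r)=\tilde u(e^{\beta/(\alpha+2)}r)+\beta$.

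\textbf{Gap in (ii).} The comparison $u_\beta\le w=b-\lambda\log\cosh r$ is unjustified.
\begin{itemize}
\item For the increasing nonlinearity $e^u$ there is no comparison principle in this direction. At a first contact point $r_1$ you only get $\bigl((\sinh r)^{n-1}(u_\beta-w)'\bigr)'>0$ on $(0,r_1)$. So $u_\beta-w$ increases up to $0$, which is no contradiction; regular Gelfand solutions cross the singular solution in exactly this way.
\item For $\lambda>\alpha$ the claim is actually false. If $u_\beta\le b-\lambda\log\cosh r$, then \eqref{radial:flux} makes $u_\beta'$ integrable on $(1,\infty)$. So $u_\beta$ would have a finite limit, contradicting $u_\beta\to-\infty$.
\item Writing $y=u_\beta+\alpha r$, the tail obeys $y''+(n-1)y'+2^{-\alpha}e^{y}\approx(n-1)\alpha$. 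Hence $(\sinh r)^\alpha e^{u_\beta}\to(n-1)\alpha$, not $0$.
\end{itemize}
The threshold does enter, as $(n-1)\alpha<\frac{(n-1)^2}{4}$. But to use the Poincar\'e inequality you would need a uniform bound, for small $\beta$, on how far $y$ overshoots this equilibrium. That non-overshoot statement is itself a separation property, and you do not prove it.

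\textbf{Gap in the stability half of (iii).} The claim that $u_\beta$ is very negative away from the origin is false. Since $(2+\alpha)\log\lambda_\beta=-\beta$ and $\tilde u(s)=\log\bigl((2+\alpha)(n-2)\bigr)-(2+\alpha)\log s+o(1)$ as $s\to\infty$, one has $u_\beta(r)\approx\log\bigl((2+\alpha)(n-2)\bigr)-(2+\alpha)\log r$ for $\lambda_\beta\ll r\ll1$, independently of $\beta$. So the potential has critical Hardy size $(2+\alpha)(n-2)r^{-2}$ on an expanding range of scales. It also tends to $(n-1)\alpha$ at infinity, where $\frac{(n-2)^2}{4\sinh^2r}$ cannot dominate it. The ``matching region'' is therefore the whole difficulty, not a technicality: you would need a Poincar\'e--Hardy weight together with a pointwise bound valid at every scale.

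\textbf{How the paper avoids pointwise bounds.}
\begin{itemize}
\item By Lemma~\ref{lem:radial-stability} and the substitution $s=\phi(r)$, $u_\beta$ is stable if and only if $v_\beta$ is.
\item By Theorem~\ref{Bcor}, $v_\beta$ is stable if and only if $v_\gamma<v_\beta$ for every $\gamma<\beta$.
\item The needed ordering comes from two separation results. Theorem~\ref{BNcor1} applies with $\ell=(m-10)/4$, since $r^{-\ell}K$ is eventually decreasing once $n>1+4\alpha$. Theorem~\ref{Bthm1} applies with $\ell=\ell_0$, since $r^{-\ell_0}K$ is decreasing and $m\ge10+4\ell_0$ if and only if $n\ge10+4\alpha$.
\item This gives stability for $\beta\le\beta^*$, respectively for all $\beta$. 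Structures (I) and (III) give the instability for large $\beta$.
\end{itemize}
Together this contains (ii) and (iii).
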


Theorem~\ref{HZthm} indicates that $1+4\alpha$ is a critical threshold dimension for stability in $\HH^n$, which is different from the Euclidean setting. In particular, they raised an open question: \emph{Is $u_\beta$ stable for every $\beta\in\RR$, when $n\ge10+4\alpha$}? 
Furthermore, the results in \cite{HZ} do not cover the singular
weight regime $\alpha\in(-2,0)$ or the two-dimensional case $n=2$.
The separation and stability properties in these regimes therefore
remain to be clarified.

Motivated by these questions, we investigate the separation and stability property of radial solutions to \eqref{eq1}. To state our separation results clearly, we define the following classification:
\begin{enumerate}[label=(\Roman*),leftmargin=*,nosep]
    \item For any $\beta,\gamma\in\RR$ with
    $\beta\ne\gamma$, two solutions $u_\beta$ and $u_\gamma$
    intersect at least once on $(0,\infty)$.

    \item For any $\beta,\gamma\in\RR$ with
    $\beta\ne\gamma$, two solutions $u_\beta$ and $u_\gamma$
    cannot intersect each other on $(0,\infty)$.

    \item There exists $\beta^*\in\RR$ such that, for any
    $\beta,\gamma\le\beta^*$ with $\beta\ne\gamma$, two solutions
    $u_\beta$ and $u_\gamma$ cannot intersect each other on $(0,\infty)$, and that for any $\beta,\gamma>\beta^*$ with
    $\beta\ne\gamma$, two solutions
    $u_\beta$ and $u_\gamma$ intersect at least once on $(0,\infty)$.
\end{enumerate}
We refer to (I), (II), and (III) as the intersection structure,
the separation structure, and the partial separation structure,
respectively.

\medskip
Our main separation results are as follows.

\begin{theorem}\label{thm1}
Let $n\ge3$ and $\alpha>-2$. Then the solution family of \eqref{eq1} has
structure {\rm (I)} if $n<1+4\alpha$, structure {\rm (III)} if
$1+4\alpha<n<10+4\alpha$, and structure {\rm (II)} if
$n\ge10+4\alpha$. If $n=1+4\alpha$, then it has either
structure {\rm (I)} or {\rm (III)}.
\end{theorem}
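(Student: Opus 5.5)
Following the strategy announced in the abstract and the Ioku--Toyoshima idea, the first step is to change variables so that radial solutions of \eqref{eq1} become radial solutions of a weighted Euclidean problem. Concretely, I introduce a new radial variable $t=t(r)$ with
\[
\frac{dt}{dr}=\Bigl(\frac{t}{\sinh r}\Bigr)^{\frac{n-1}{m-1}}
\]
for a dimension $m$ to be chosen. Then $(\sinh r)^{n-1}\frac{d}{dr}=t^{m-1}\frac{d}{dt}$, and \eqref{radial1} becomes
\[
(t^{m-1}v_t)_t=-t^{m-1}K(t)e^{v},\qquad v(t)=u(r(t)),
\]
with $K(t)=(\sinh r)^{\alpha+2(n-1)}t^{-2(m-1)}\,(dt/dr)^{-2}\cdot(\ldots)$, i.e.\ an explicit weight. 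I choose $m$ (e.g.\ $m=n$, or $m$ such that the resulting weight has a pure power behavior $K(t)\sim t^{\alpha}$ as $t\to0$ and $K(t)\sim c\,t^{\ell}$ as $t\to\infty$ with explicit $\ell$). Near the origin the weight behaves like $t^\alpha$; near infinity, since $\sinh r\sim e^r/2$, I compute $\ell$ and expect the effective pair $(m,\ell)$ at infinity to satisfy $m-10-4\ell\leftrightarrow n-1-4\alpha$ type relations. Precisely, I want the transformed problem to sit in the framework of known results for $-\Delta v=K(|x|)e^v$ (\cite{B,BN,H}), whose separation criteria are phrased via the exponents of $K$ at $0$ and $\infty$ together with monotonicity of $t^{\,\cdot}K'/K$.

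The second step is to invoke those Euclidean results. The criterion takes the following form. Intersection versus separation at the origin scale (large $\beta$) is governed by $n$ versus $10+4\alpha$, because blowing up $u_\beta$ as $\beta\to\infty$ yields the Euclidean H\'enon equation \eqref{Euc}. At the small-$\beta$ scale (solutions living at infinity) the behavior is governed by the limiting exponent at infinity, and this produces the threshold $1+4\alpha$. Then I combine them. If $n\ge10+4\alpha$, both the small and large scales separate, and the monotonicity hypothesis on the weight gives (II) globally. If $n<1+4\alpha$, intersection occurs at every scale, giving (I). In the middle range, small $\beta$ separate and large $\beta$ intersect. To get the sharp threshold $\beta^*$ of (III), I define
\[
\beta^*=\sup\{\beta:\ u_\gamma,u_{\gamma'} \text{ do not intersect for all } \gamma\neq\gamma'\le\beta\}.
\]
I then use continuous dependence and the monotonicity in $\beta$ of the ordered family, via a zero-number argument on $\partial_\beta u_\beta$, to show that beyond $\beta^*$ every pair intersects. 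In the borderline case $n=1+4\alpha$, the small-$\beta$ analysis is degenerate, and only the dichotomy (I) or (III) survives.

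The main obstacle is verifying that the transformed weight $K(t)$ satisfies the precise structural hypotheses of the Euclidean separation theorems, typically monotonicity of $tK'(t)/K(t)$ between its limits $\alpha$ and $\ell$, uniformly for all $\alpha>-2$, including negative $\alpha$. I would first try computing $tK'/K$ explicitly as a function of $r$, using $t(r)$ defined via the integral $\int_0^r(\sinh s)^{-(n-1)/(m-1)}\ldots$. I would then prove monotonicity by a Sturm-type comparison of $\coth r$ against $t'/t$. If that fails, I would fall back on analyzing $\partial_\beta u_\beta$ directly through the linearized equation and the Hardy-type inequality on $\HH^n$.
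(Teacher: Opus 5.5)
\textbf{Verdict: genuine gap.} Your overall architecture is the right one: map radial solutions to a weighted problem in $\RR^m$ and read the thresholds off the exponents of the weight at $0$ and $\infty$. But the first step is misstated, and the steps that carry the proof are never carried out.

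\textbf{The transformation.} The ODE $dt/dr=(t/\sinh r)^{(n-1)/(m-1)}$ is inconsistent with the identity $(\sinh r)^{n-1}\frac{d}{dr}=t^{m-1}\frac{d}{dt}$ that you actually need. That identity forces $dt/dr=t^{m-1}(\sinh r)^{1-n}$. The integration constant must be fixed at infinity: $t^{2-m}=(m-2)\int_r^\infty(\sinh s)^{1-n}\,ds=:(m-2)T(r)$. Any other constant makes $t$ either bounded or blow up at finite $r$, so you would not reach all of $\RR^m$. With this choice the weight is exactly $K=(\sinh r)^{2(n-1)+\alpha}t^{-2(m-1)}$, with no extra factors. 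For every $m\ge3$ you get $K\sim k_0t^{\ell_0}$ at $0$ and $K\sim k_\infty t^{\ell_\infty}$ at $\infty$, where $\ell_0=\frac{(m-2)(\alpha+2)}{n-2}-2$ and $\ell_\infty=\frac{(m-2)\alpha}{n-1}-2$. Hence $m-10-4\ell_0=\frac{(m-2)(n-10-4\alpha)}{n-2}$ and $m-10-4\ell_\infty=\frac{(m-2)(n-1-4\alpha)}{n-1}$. These are the relations you only ``expect''.

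\textbf{The missing verifications.} The scale heuristics (large $\beta$ seen by the origin, small $\beta$ by infinity) say nothing about \emph{all} pairs. Each structure comes from a specific Euclidean theorem, and you verify none of their hypotheses.
\begin{itemize}
\item \emph{Structure (II) for $n\ge10+4\alpha$.} This needs a \emph{global} condition: Bae's criterion (Theorem~\ref{Bthm1}) $t^{-\ell}K\le\delta K_\ell$ with $\delta=\frac{m-2}{4(2+\ell)}$. For $\ell=\ell_0$ it holds once $t^{-\ell_0}K$ is nonincreasing on all of $(0,\infty)$; then $K_{\ell_0}=t^{-\ell_0}K$, and $\delta\ge1\iff n\ge10+4\alpha$. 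You call this the main obstacle but give no argument, and monotonicity of $tK'/K$ is more than is needed. The missing computation is $tK'/K=(2(n-1)+\alpha)(m-2)(\sinh r)^{n-2}\cosh r\,T(r)-2(m-1)$. This is $<\ell_0$ if and only if $(n-2)(\sinh r)^{n-2}\cosh r\,T(r)<1$. That inequality follows from $T(r)<\frac{1}{\cosh r}\int_r^\infty\cosh s\,(\sinh s)^{1-n}\,ds=\frac{1}{(n-2)(\sinh r)^{n-2}\cosh r}$.
\item \emph{Structure (III) for $1+4\alpha<n<10+4\alpha$.} Defining $\beta^*$ as a supremum and invoking a zero-number argument on $\partial_\beta u_\beta$ does not show that \emph{every} pair above $\beta^*$ intersects. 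It does not even show that the family has one of the structures (I)--(III). That trichotomy is exactly what the Bae--Naito results supply:
  \begin{itemize}
  \item the origin asymptotics with $m<10+4\ell_0$ give (I) or (III) (Theorem~\ref{BNthm1});
  \item eventual monotonicity of $t^{-\ell}K$ with $\ell=(m-10)/4$ gives (II) or (III) (Theorem~\ref{BNcor1}). This monotonicity holds because $tK'/K\to\ell_\infty$, and $\ell_\infty<\ell$ if and only if $n>1+4\alpha$.
  \end{itemize}
  Intersecting the two alternatives gives (III).
\item \emph{The case $n=1+4\alpha$.} Here only the first alternative, (I) or (III), is available.
\item \emph{Structure (I) for $n<1+4\alpha$.} This is not ``intersection at every scale''. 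It follows from Theorem~\ref{BNthmC} applied to the asymptotics at infinity. You must also check $\ell_\infty>-2$, which holds since $\alpha>(n-1)/4>0$.
\end{itemize}
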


%The two-dimensional case is different because the transformed weight has a non-power-type behavior near the origin.
\begin{theorem}\label{thm2}
Let $n=2$ and $\alpha>-2$. Then the solution family of \eqref{eq1} has structure {\rm (I)} if $\alpha>1/4$, structure {\rm (III)} if $\alpha=1/4$, and either structure
{\rm (II)} or {\rm (III)} if $-2<\alpha<1/4$.
\end{theorem}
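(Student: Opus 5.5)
The plan is to reduce the radial problem \eqref{radial1} with $n=2$ to a radial Liouville-type equation in some $\RR^m$ with $m>2$. We then read off the intersection/separation behaviour from the known Euclidean theory for $-\Delta v=K(|x|)\exp(v)$ in \cite{B,BN,H}, as was done for Theorem~\ref{thm1}. For $n=2$ the equation is $(\sinh r\,u')'=-(\sinh r)^{1+\alpha}\exp(u)$.

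\emph{Change of variables.} Introduce $\tau=\log\tanh(r/2)$, so that $d\tau/dr=1/\sinh r$ and $\tau$ maps $(0,\infty)$ onto $(-\infty,0)$. The equation then becomes
\[
u_{\tau\tau}=-(\sinh r)^{2+\alpha}\exp(u).
\]
For radial functions on $\RR^m$ with $m>2$, set $\sigma=-t^{2-m}/(m-2)$, so that $t^{m-1}v_t=v_\sigma$ and $\sigma$ also ranges over $(-\infty,0)$. Identifying $\sigma=\tau$ turns $u_\beta$ into a radial solution $v_\beta(t)=u_\beta(r(t))$ of
\[
-\Delta v=K(t)\exp(v)\ \text{in }\RR^m,\qquad K(t)=t^{2-2m}(\sinh r(t))^{2+\alpha}.
\]
The data transform as follows:
\begin{itemize}
\item $v_\beta(0)=\beta$;
\item the flux condition $\lim_{r\to0}\sinh r\,u_\beta'(r)=0$ becomes $t^{m-1}v_t\to0$;
\item intersections of $u_\beta,u_\gamma$ on $(0,\infty)$ correspond bijectively to intersections of $v_\beta,v_\gamma$ on $(0,\infty)$.
\end{itemize}
So structures (I), (II), (III) are preserved. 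The dimension $m>2$ is a free parameter that we fix conveniently, as in \cite{IT}.

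\emph{Asymptotics of $K$.} Near $t=0$ we have $\tau\sim\log r$, so $r\approx\exp(-t^{2-m}/(m-2))$. Hence $K(t)\approx t^{2-2m}\exp(-(2+\alpha)t^{2-m}/(m-2))$. This weight is exponentially flat rather than power-like, which is the non-power behaviour at the origin that distinguishes $n=2$. Near $t=\infty$ we have $\tau\approx-2e^{-r}$, so $\sinh r\sim 1/|\tau|\sim(m-2)t^{m-2}$. Hence $K(t)\sim c\,t^{\ell}$ with $\ell=(m-2)\alpha-2$. The Euclidean critical-dimension relation $m$ versus $10+4\ell=2+4(m-2)\alpha$ reduces to $m-2$ versus $4\alpha(m-2)$, that is, $1$ versus $4\alpha$, independently of $m$. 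The Euclidean classification (Bae, Bae--Naito, and the hyperbolic-adapted versions in \cite{H}) says roughly the following.
\begin{itemize}
\item Large-data solutions concentrate at the origin, and their behaviour is governed by the weight near $0$. Since $K$ vanishes to infinite order there, the flat weight acts like $\ell_0\to\infty$, which always gives intersection for large $\beta$.
\item Small-data solutions spread to infinity, and their behaviour is governed by the exponent $\ell$ at infinity. They intersect when $m<10+4\ell$, i.e.\ $\alpha>1/4$, and separate when $m>10+4\ell$, i.e.\ $\alpha<1/4$.
\end{itemize}
This yields (I) for $\alpha>1/4$, and the dichotomy (II) or (III) for $\alpha<1/4$. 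In the latter case the abstract result gives separation for $\beta\le\beta^*$ and intersection above, with possibly $\beta^*=+\infty$.

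\emph{Main obstacle.} The hard part is verifying the hypotheses of the Euclidean theorems for this $K$. Those results typically require monotonicity or limits of $tK'(t)/K(t)$, or of $t^{-\ell}K(t)$, on all of $(0,\infty)$, not only at the ends. I would first compute $tK'/K=2-2m+(2+\alpha)\coth r\cdot t\,dr/dt$ with $t\,dr/dt=(m-2)\sinh r\,|\tau|$. I would then try to show that this quantity is monotone in $r$, decreasing from $+\infty$ to $\ell$, by an elementary inequality in $r$.

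The borderline case $\alpha=1/4$ needs extra care. There $m=10+4\ell$ exactly, so one must use the sign of the next-order correction of $K(t)t^{-\ell}$ at infinity, which comes from $\sinh r\cdot|\tau|=1+O(e^{-2r})$. This sign should decide separation of small solutions. Combined with intersection for large $\beta$ from the flat origin, it gives (III). I expect this borderline refinement, together with the global monotonicity check, to be the most delicate step.
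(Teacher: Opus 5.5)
\textbf{Verdict: genuine gap at $\alpha=1/4$.} Your change of variables coincides with the paper's. Here $\tau=-\log\coth(r/2)$, so identifying $\sigma=\tau$ gives $t=\phi(r)$, and your asymptotics of $K$ at $0$ and $\infty$ are correct. The case $\alpha>1/4$ goes as in the paper, via Theorem~\ref{BNthmC} with $\ell=(m-2)\alpha-2>-2$ and $m<10+4\ell$. Only the limit at infinity is used there; the flat origin plays no role.

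The gap is that structure (III) at $\alpha=1/4$ requires you to \emph{exclude} (II), i.e.\ to show that some pair $u_\beta\neq u_\gamma$ crosses. Your only argument is the heuristic that the exponentially flat weight ``acts like $\ell_0\to\infty$''. The Euclidean theory you invoke does not cover this: Theorem~\ref{BNthm1} needs $t^{-\ell_0}K(t)\to k_0>0$ for some finite $\ell_0$, whereas here $t^{-\ell_0}K\to0$ for every $\ell_0$.

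The missing idea is a blow-up in the original hyperbolic variable. Set $q=2+\alpha$ and $\lambda_\beta=e^{-\beta/q}$. Then $w_\beta(s)=u_\beta(\lambda_\beta s)-\beta$ and $z_\beta(s)=u_{\beta+1}(\lambda_\beta s)-\beta$ both solve
\[
y''+\lambda_\beta\coth(\lambda_\beta s)\,y'+\Bigl(\frac{\sinh(\lambda_\beta s)}{\lambda_\beta}\Bigr)^{\alpha}e^{y}=0,
\]
with $y(0)=0$ and $y(0)=1$ respectively. From the integral form, on $[0,R]$ and uniformly for large $\beta$,
\[
y\le y(0),\qquad |y'(s)|\le C_R s^{q-1},\qquad |y(s)-y(0)|\le C_R s^{q}.
\]
By Arzel\`a--Ascoli and uniqueness of the limit problem, $w_\beta\to W_0$ and $z_\beta\to W_1$ locally uniformly. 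Here $W_a(s)=a-2\log\bigl(1+\frac{e^a}{2q^2}s^q\bigr)$ solves $W''+W'/s+s^{\alpha}e^{W}=0$ with $W(0)=a$. Since $W_1-W_0$ goes from $1$ to $-1$, $u_\beta$ and $u_{\beta+1}$ cross for all large $\beta$. The same rescaling works for every $\alpha>-2$, so your intuition about the flat origin is right, but it must be proved this way.

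The separation half of the critical case is also not decided by ``the sign of the next-order correction''. The monotonicity you plan to prove ($tK'/K$ decreasing from $+\infty$ to $\ell$) says exactly that $t^{-\ell}K(t)=\bigl((m-2)\sinh r\,|\tau|\bigr)^{9/4}$ \emph{increases} to $k_\infty$. That sign is the unfavourable one, so the nonincreasing criterion of Theorem~\ref{BNcor1} is unavailable. You need the critical-dimension result, Theorem~\ref{Bcritical}, whose hypotheses are:
\begin{itemize}
\item $t^{-\ell}K$ nondecreasing on $(0,\infty)$, which is your global monotonicity;
\item limit $k_\infty$ at infinity;
\item $\int_1^\infty(k_\infty-t^{-\ell}K)\,t^{(m-4)/2}\,dt<\infty$.
\end{itemize}
What makes the integral converge is the \emph{rate} of approach, not its sign. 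Indeed $\sinh r\,|\tau|=1-\frac23e^{-2r}+O(e^{-4r})$ and $t^{m-2}\sim e^{r}/(2(m-2))$ give an integrand $O(t^{-(3m-4)/2})$.

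For $\alpha<1/4$ you cannot use the asymptotic exponent $\ell$ itself. First, $t^{-\ell}K$ is increasing. Second, $\ell\le-2$ when $\alpha\le0$, outside the range of the Euclidean theorems. Use $\ell'=(m-10)/4>-2$ instead, so that $m=10+4\ell'$. Since $tK'/K\to\ell<\ell'$, $t^{-\ell'}K$ is eventually decreasing, and Theorem~\ref{BNcor1} gives (II) or (III). This needs only the limit of $tK'/K$ at infinity, not global monotonicity.
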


The above separation results, together with the relation between
separation and stability for the transformed Euclidean problem,
lead to the following stability classification.

\begin{corollary}\label{cor1}
Let $n\ge3$ and $\alpha>-2$.
\begin{enumerate}[label=\textnormal{(\roman*)},leftmargin=*,nosep]
    \item If $n<1+4\alpha$, then $u_\beta$ is unstable for any
    $\beta\in\RR$.

    \item If $1+4\alpha<n<10+4\alpha$, then there exists
    $\beta^*\in\RR$ such that $u_\beta$ is stable for any
    $\beta\le\beta^*$, and $u_\beta$ is unstable for any $\beta>\beta^*$.

    \item If $n\ge10+4\alpha$, then $u_\beta$ is stable for any $\beta\in\RR$.
    
    \item If $n=1+4\alpha$, then either any radial solution is unstable, or there exists $\beta^*\in\RR$ such that $u_\beta$ is stable for $\beta\le\beta^*$ and unstable for $\beta>\beta^*$.
\end{enumerate}
\end{corollary}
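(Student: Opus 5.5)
We derive Corollary~\ref{cor1} from Theorem~\ref{thm1} through the standard link between separation of the radial family and stability, available for the transformed weighted Euclidean problem and then carried back to $\HH^n$. The criterion to establish is the following: \emph{$u_\beta$ is stable if and only if $u_\beta$ does not intersect $u_\gamma$ for all $\gamma<\beta$} (equivalently, $u_\gamma<u_\beta$ on $[0,\infty)$ for every $\gamma<\beta$). Granting this, the four items are bookkeeping. Under structure (I), every $u_\beta$ meets every $u_\gamma$ with $\gamma<\beta$, so all $u_\beta$ are unstable; this gives (i). Under structure (II), no two solutions meet, so all are stable; this gives (iii). Under structure (III), $u_\beta$ with $\beta\le\beta^*$ does not meet any $u_\gamma$ with $\gamma<\beta\le\beta^*$, hence it is stable. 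For $\beta>\beta^*$, pick $\gamma\in(\beta^*,\beta)$; then $u_\gamma$ and $u_\beta$ intersect, so $u_\beta$ is unstable. This gives (ii). Item (iv) is the same dichotomy applied to the two alternatives that Theorem~\ref{thm1} allows at $n=1+4\alpha$.

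\textbf{Sufficiency} (separation implies stability). Suppose $u_\gamma<u_\beta$ for all $\gamma<\beta$. The family is smooth in $\beta$, so $w:=\partial_\beta u_\beta\ge0$. Moreover $w(0)=1$, and $w$ solves the linearized equation $-\Delta_{\HH^n}w=(\sinh r)^\alpha e^{u_\beta}w$. By Sturm comparison, or by ODE uniqueness at a double zero, $w>0$ on $[0,\infty)$. A positive supersolution of the linearized operator yields $Q_{u_\beta}\ge0$. One writes $\varphi=w\psi$ and integrates by parts (Picone identity), obtaining $Q_{u_\beta}(\varphi)=\int w^2|\nabla\psi|^2\,dV_{\HH^n}\ge0$. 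This argument also works for nonradial $\varphi\in C_c^\infty$, since $w$ is radial and the identity does not use radial symmetry of $\varphi$. For $\alpha<0$ one must check that $w$ is regular enough near the origin; this follows from the asymptotics $u'=O(r^{\alpha+1})$ and the integrated form \eqref{radial:flux}.

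\textbf{Necessity} (intersection implies instability). Suppose $u_\gamma$ meets $u_\beta$ for some $\gamma<\beta$, and let $r_0$ be the first intersection point. Set $v=u_\beta-u_\gamma>0$ on $[0,r_0)$ with $v(r_0)=0$. By convexity of $\exp$,
\[
-\Delta v=(\sinh r)^\alpha(e^{u_\beta}-e^{u_\gamma})\ge(\sinh r)^\alpha e^{u_\gamma}v .
\]
This inequality involves $e^{u_\gamma}$ rather than $e^{u_\beta}$, which is the main subtlety: testing with $v$ on the ball $B_{r_0}$ gives information about $Q_{u_\gamma}$, not $Q_{u_\beta}$. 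The way around this is the first-zero argument for $w_\beta=\partial_\beta u$. An intersection at some $\gamma<\beta$ forces $w_{\beta'}$ to vanish for some $\beta'\in[\gamma,\beta]$. Otherwise $u_{\beta'}$ would be strictly increasing in $\beta'$ on $[\gamma,\beta]$, and $u_\gamma$, $u_\beta$ could not meet. A zero of $w_{\beta'}$ at $r_1$ makes $w_{\beta'}\chi_{B_{r_1}}$ a test function with $Q_{u_{\beta'}}=0$, and a small perturbation beyond $r_1$ gives $Q_{u_{\beta'}}<0$.

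To transfer instability from $u_{\beta'}$ to $u_\beta$ itself, I would prove that instability is monotone in $\beta$ within the regimes of Theorem~\ref{thm1}. This is exactly where the structure classification enters: in structures (I) and (III) every $u_\beta$ with $\beta>\beta^*$ meets every $u_\gamma$ with $\gamma\in(\beta^*,\beta)$. One then takes $\gamma\uparrow\beta$ and shows that the first intersection points stay bounded. In the limit, $w_\beta$ itself has a zero, and $u_\beta$ is unstable. Controlling these intersection points, so that they do not escape to infinity as $\gamma\to\beta$, is the step I expect to be hardest. I would try to get it from the Euclidean side, using the oscillation results invoked in the proof of Theorem~\ref{thm1}, which give a zero of the linearized solution at a finite radius.
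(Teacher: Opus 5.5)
\textbf{Gap: the necessity direction of your criterion is never proved.} Your reduction of items (i)--(iv) to the criterion ``$u_\beta$ is stable iff $u_\gamma<u_\beta$ for all $\gamma<\beta$'' is exactly the paper's bookkeeping. Your sufficiency direction, via $w=\partial_\beta u_\beta>0$ and Picone, is fine.

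The gap is the converse: that if $u_\beta$ meets some $u_\gamma$ with $\gamma<\beta$, then $u_\beta$ itself is unstable. Items (i), (iv) and the instability half of (ii) all rest on it. Your detour through $w_{\beta'}$ only makes some intermediate $u_{\beta'}$, $\beta'\in[\gamma,\beta]$, unstable, and the transfer to $u_\beta$ is left open. That transfer is circular. Boundedness of the first intersection points as $\gamma\uparrow\beta$ is essentially equivalent to $w_\beta$ having a zero, i.e.\ to the instability of $u_\beta$ you want to prove. The separation theorems behind Theorem~\ref{thm1} only say that pairs intersect, not where, so they do not supply this bound.

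\textbf{The missing idea.} The ``main subtlety'' you identify is not there: strict convexity of $\exp$ also gives the inequality in the other direction. For $a<b$ one has $e^b-e^a<e^b(b-a)$. Take $r_0$ to be the first intersection radius and $v=u_\beta-u_\gamma$, so $v>0$ on $[0,r_0)$ and $v(r_0)=0$. Multiply $-\Delta_{\HH^n}v=(\sinh r)^\alpha(e^{u_\beta}-e^{u_\gamma})$ by $v$ and integrate over $B_{r_0}$. The term at the pole vanishes because $(\sinh r)^{n-1}v'\to0$. This gives
\[
\int_{B_{r_0}}|\nabla_{\HH^n}v|^2\,dV_{\HH^n}<\int_{B_{r_0}}(\sinh r)^\alpha e^{u_\beta}v^2\,dV_{\HH^n},
\]
that is, $Q_{u_\beta}(v)<0$ for $v$ extended by zero. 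Now approximate $v$ by test functions that are bounded uniformly and converge in $H^1$, for instance using the logarithmic cutoff at the pole from Lemma~\ref{lem:radial-stability}. Since $(\sinh r)^\alpha e^{u_\beta}\in L^1_{\mathrm{loc}}$, dominated convergence gives $Q_{u_\beta}(\varphi)<0$ for some $\varphi\in C_c^\infty(\HH^n)$, so $u_\beta$ is unstable. This closes your argument directly on $\HH^n$, with no limiting procedure in $\gamma$.

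\textbf{How the paper does it.} The paper never proves the criterion itself. Lemma~\ref{lem:radial-stability} reduces stability to radial test functions. The substitution $s=\phi(r)$ then identifies the radial quadratic form of $u_\beta$ with that of $v_\beta$ on $\RR^m$. Finally it quotes Hasegawa's criterion (Theorem~\ref{Bcor}) for $v_\beta$. Your direct route on $\HH^n$ is more self-contained, once the necessity direction is fixed as above.
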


\begin{corollary}\label{cor2}
Let $n=2$ and $\alpha>-2$.
\begin{enumerate}[label=\textnormal{(\roman*)},leftmargin=*,nosep]
    \item If $\alpha>1/4$, then $u_\beta$ is unstable for every
    $\beta\in\RR$.

    \item If $\alpha=1/4$, then there exists $\beta^*\in\RR$
    such that $u_\beta$ is stable for $\beta\le\beta^*$ and
    unstable for $\beta>\beta^*$.

    \item If $-2<\alpha<1/4$, then either every radial solution
    is stable, or there exists $\beta^*\in\RR$ such that
    $u_\beta$ is stable for $\beta\le\beta^*$ and unstable for
    $\beta>\beta^*$.
\end{enumerate}
\end{corollary}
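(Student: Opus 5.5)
The plan is to deduce Corollary~\ref{cor2} from Theorem~\ref{thm2}, using the standard equivalence between separation and stability for radial solutions. The key fact is the following. If $u_\gamma>u_\beta$ on $[0,\infty)$ for all $\gamma>\beta$ near $\beta$, then $w=\partial_\beta u_\beta$ exists. It solves the linearized equation $-\Delta_{\HH^n}w=(\sinh r)^\alpha e^{u_\beta}w$, and by the separation it satisfies $w\ge0$. Since $w(0)=1$, the maximum principle gives $w>0$. A positive solution of the linearized equation yields $Q_{u_\beta}\ge0$ via the standard Picone/Agmon--Allegretto--Piepenbrink argument: write $\varphi=w\psi$ and integrate by parts, which gives $Q_{u_\beta}(\varphi)=\int w^2|\nabla\psi|^2\ge0$. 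Nonradial test functions are handled the same way, since $w$ is radial and positive.

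Conversely, suppose $u_\beta$ intersects $u_\gamma$ for all $\gamma$ near $\beta$ on one side. Then $w$ has a zero $r_0$, and by Sturm comparison the linearized operator on the geodesic ball $B_{r_0}$ has a nonpositive first Dirichlet eigenvalue. Perturbing to a slightly larger ball gives a negative eigenvalue, hence instability. To make this rigorous, I would pass to the transformed weighted Euclidean problem, which I take to be already developed in the paper. There the separation/stability correspondence (as in \cite{WY} and the Euclidean literature) holds for radial solutions, and stability in $\HH^2$ coincides with stability of the transformed problem. This is because the Dirichlet forms correspond under the radial change of variables, with the angular parts handled separately; a radial positive supersolution suffices for all test functions.

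Then I split into cases. For $\alpha>1/4$, structure (I) gives that every $u_\beta$ intersects all other $u_\gamma$, so every $u_\beta$ is unstable. For $\alpha=1/4$, structure (III) gives the threshold $\beta^*$. For $\beta<\beta^*$, separation from nearby $\gamma$ gives stability, and the endpoint $\beta=\beta^*$ follows because stability is a closed condition: $Q_{u_\beta}(\varphi)$ is continuous in $\beta$ for fixed $\varphi$. For $\beta>\beta^*$, intersection with nearby $\gamma>\beta^*$ gives a sign-changing $w$ and hence instability. The subtle point is that intersecting $u_\gamma$ for all $\gamma\ne\beta$ must imply that $w=\partial_\beta u_\beta$ itself has a zero. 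I would obtain this by taking limits $\gamma\to\beta$ of the intersection points, which stay bounded by the uniform asymptotic analysis in the transformed problem. For $-2<\alpha<1/4$, the alternative (II)/(III) gives the alternative in (iii) in the same way.

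I expect the main obstacle to be this limiting step, from intersection of every pair to a sign change of $w$. The intersection points $r_\gamma$ might escape to infinity as $\gamma\to\beta$, in which case $w$ would have no zero. I would first try to avoid this by invoking the transformed Euclidean result directly, since it states the equivalence "separation $\Leftrightarrow$ stability" for radial solutions. Failing that, I would use an oscillation argument: compare $w$ with the asymptotic linearized equation at infinity, where $e^{u_\beta}$ behaves like an explicit power in the transformed variable. Its oscillatory behavior in the range $\alpha\ge1/4$ is exactly what distinguishes the cases.
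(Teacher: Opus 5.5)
\textbf{Your fallback is the paper's proof.} You propose to pass to $\RR^m$ and invoke the Euclidean separation--stability equivalence; that is exactly what the paper does. It first shows that $u_\beta$ is stable iff $v_\beta$ is stable. This uses Lemma~\ref{lem:radial-stability}, which reduces to radial test functions by a logarithmic cutoff at the pole and by discarding the angular gradient, followed by the change of variables $s=\phi(r)$ via \eqref{phi:der}. It then quotes a Euclidean criterion.

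State that criterion precisely. It is one-sided, and it comes from \cite[Corollary~6.1]{H2} (Theorem~\ref{Bcor}), not from \cite{WY}, which is a Liouville theorem: $v_\beta$ is stable iff $v_\gamma<v_\beta$ on $[0,\infty)$ for \emph{every} $\gamma<\beta$. With it the case analysis is immediate:
\begin{itemize}
\item Structure (I) gives instability for all $\beta$: take any $\gamma<\beta$.
\item Structure (II) gives stability for all $\beta$.
\item Under (III), every $\beta\le\beta^*$ is stable, because all $\gamma<\beta$ also satisfy $\gamma\le\beta^*$. Every $\beta>\beta^*$ is unstable, via a single $\gamma\in(\beta^*,\beta)$.
\end{itemize}
Your closedness argument at $\beta=\beta^*$ is therefore unnecessary.

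\textbf{Your primary route has a genuine gap, at the step you flag.} Nothing you wrote keeps the intersection points $r_\gamma$ bounded as $\gamma\to\beta$, so the claim that $w=\partial_\beta u_\beta$ has a zero is unproved. The claim is also misformulated. Intersections of $u_\beta$ with \emph{larger} $\gamma$ bear on the stability of $u_\gamma$, not of $u_\beta$. For instance, (III) does not prevent the stable $u_{\beta^*}$ from meeting every $u_\gamma$ with $\gamma>\beta^*$.

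The oscillation fallback cannot rescue (ii) either. Oscillation of the linearized equation would make $u_\beta$ unstable, yet at $\alpha=1/4$ the solutions with $\beta\le\beta^*$ are stable.

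\textbf{The missing idea: no limit is needed.} Let $\gamma<\beta$, and let $r_1$ be the first point with $u_\gamma(r_1)=u_\beta(r_1)$. Then $z=u_\beta-u_\gamma$ is positive in $B_{r_1}$ and vanishes on $\partial B_{r_1}$. Since $e^a-e^b<e^a(a-b)$ for $a>b$,
\[
-\Delta_{\HH^2}z=(\sinh r)^\alpha\bigl(e^{u_\beta}-e^{u_\gamma}\bigr)<(\sinh r)^\alpha e^{u_\beta}z\quad\text{in }B_{r_1}.
\]
Testing with $z$ extended by zero (approximated in $H^1$) gives $Q_{u_\beta}(z)<0$.

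Combine this with your positive-$w$/Picone argument for the other direction, run with $\gamma\to\beta^-$ so that $w\ge0$ uses only separation from smaller $\gamma$. Together they prove directly in $\HH^2$ that $u_\beta$ is stable iff $u_\gamma<u_\beta$ for all $\gamma<\beta$. That version is self-contained and bypasses both the transformation and \cite{H2}. The paper's version is shorter because it reuses the transformation already built for Theorem~\ref{thm2}.
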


\begin{remark} Corollary~\ref{cor1} (iii) gives an affirmative
answer to the question raised by Huang and Zhao  \cite{HZ}. Moreover, in the range $\alpha>0$ and $1+4\alpha<n<10+4\alpha$, Theorem~\ref{HZthm} and Corollary~\ref{cor1} (ii) give $\beta^*\in[\beta_{0}(n,\alpha),\beta_1]$.
\end{remark}

The paper is organized as follows. 
In Section~2, we introduce the transformation from
the radial hyperbolic equation to a weighted Euclidean equation and prove
Theorems~\ref{thm1} and \ref{thm2}.
In Section~3, we give the proof of Corollaries~\ref{cor1} and~\ref{cor2}.
Finally, the Euclidean separation results used in Section~2 are given in Appendix.

\section{Proofs of Theorems~\ref{thm1} and~\ref{thm2}}
Fix $n\ge2$, an integer $m\ge3$, and $\alpha>-2$. Define
\begin{align*}
    T(t)=\int_t^\infty\frac{ds}{(\sinh s)^{n-1}},\qquad t>0.
\end{align*}
Since $(\sinh t)^{1-n}\in L^1(1,\infty)$, the function
$T\in C^\infty((0,\infty))$ is well defined and positive. As $t\to0^+$,
\begin{equation}\label{T0}
    T(t)\sim
    \begin{cases}
        \log(1/t), &\quad\text{if }n=2,\\[1mm]
        \dfrac{t^{2-n}}{n-2}, &\quad\text{if }n\ge3,
    \end{cases}
\end{equation}
whereas
\begin{equation}\label{T:asyinfty}
    T(t)\sim\frac{2^{n-1}}{n-1}\exp((1-n)t)
    \qquad\text{as }t\to\infty.
\end{equation}
Define the transformation $\phi:(0,\infty)\to(0,\infty)$ by
\begin{align*}
    \phi(t)=\bigl[(m-2)T(t)\bigr]^{1/(2-m)}, 
\end{align*}
then
\begin{equation}\label{phi:der}
    \phi'(t)=\frac{\phi^{m-1}(t)}{(\sinh t)^{n-1}}>0,
    \qquad t>0.
\end{equation}
Furthermore, it follows from \eqref{T0} and \eqref{T:asyinfty} that, 
\begin{equation}\label{phi:asy0}
    \phi(t)\sim
    \begin{cases}
        \bigl[(m-2)\log(1/t)\bigr]^{-1/(m-2)}, &\quad\text{if }n=2,\\[1mm]
        \displaystyle
        \left(\frac{n-2}{m-2}\right)^{1/(m-2)}
        t^{(n-2)/(m-2)}, &\quad\text{if }n\ge3,
    \end{cases}
    \qquad\text{as }t\to0^+.
\end{equation}
and 
\begin{equation}\label{phi:asyinfty}
    \phi(t)\sim
    \left[\frac{n-1}{2^{n-1}(m-2)}\right]^{1/(m-2)}
    \exp\left(\frac{n-1}{m-2}t\right)
    \qquad\text{as }t\to\infty.
\end{equation}
Thus $\phi\in C^\infty((0,\infty))$ is a strictly increasing
bijection from $(0,\infty)$ onto $(0,\infty)$.

For a solution $u$ of \eqref{radial1}, set
$v(r)=u(t)$, where $r=\phi(t)$. Then $v$ satisfies
\begin{equation}\label{eq2}
    \begin{cases}
        \displaystyle
        v''(r)+\frac{m-1}{r}v'(r)+K(r)\exp(v(r))=0,
        & r>0,\\
        v(0)=\beta,
    \end{cases}
\end{equation}
where
\begin{align*}
    K(r)=
    \frac{\bigl(\sinh(\phi^{-1}(r))\bigr)^{2(n-1)+\alpha}}
    {r^{2(m-1)}}.
\end{align*}
Moreover, by \eqref{phi:der}, we have 
\begin{align*}
    r^{m-1}v'(r)
    =(\sinh t)^{n-1}u'(t)\longrightarrow0
    \qquad\text{as }r\to0^+.
\end{align*}
Thus $v$ is the regular radial solution of \eqref{eq2}. When
necessary, we write $v=v_\beta$ to indicate its dependence on
$\beta$. Clearly, $K\in C^\infty((0,\infty))$ and $K(r)>0$ for
$r>0$.

We next determine the asymptotic behavior of $K$ near the origin.
If $n=2$, then $T(t)=\log\coth(t/2)$. Since
$r^{m-2}=((m-2)T(t))^{-1}$, we have
\begin{align*}
    t
    &=2\operatorname{arctanh}\left(
    \exp\left\{-\frac{1}{(m-2)r^{m-2}}\right\}\right)\\
    &=2\exp\left\{-\frac{1}{(m-2)r^{m-2}}\right\}(1+o(1))
    \qquad\text{as }r\to0^+.
\end{align*}
For $n\ge3$, the corresponding power-type asymptotic follows from
\eqref{phi:asy0}, that is, 
\begin{equation}\label{Kasy0}
    K(r)\sim
    \begin{cases}
        \displaystyle
        2^{2+\alpha}r^{-2(m-1)}
        \exp\left(
        -\frac{2+\alpha}{m-2}r^{-(m-2)}
        \right), &\quad\text{if }n=2,\\[2mm]
        \displaystyle
        k_0 r^{\ell_0}, &\quad\text{if }n\ge3,
    \end{cases}
    \qquad\text{as }r\to0^+,
\end{equation}
where
\begin{align*}
    \ell_0
    =\frac{(m-2)(\alpha+2)}{n-2}-2>-2,
    \qquad
    k_0
    =\left(\frac{m-2}{n-2}\right)^{
    \frac{2(n-1)+\alpha}{n-2}}>0,
\end{align*}
since $n\ge3$, $m\ge3$ and $\alpha>-2$. Consequently, $rK\in L^1(0,1)$ and $\lim_{r\to0^+}r^2K(r)=0$ for any $n\ge2$ and $\alpha>-2$.

Similarly, we obtain from  \eqref{phi:asyinfty} that 
\begin{equation}\label{Kinfty}
    \lim_{r\to\infty}r^{-\ell_\infty}K(r)=k_\infty,
\end{equation}
where 
\begin{align*}
    \ell_\infty
    =\frac{m-2}{n-1}\alpha-2,
    \qquad
    k_\infty
    =\left(\frac{m-2}{n-1}\right)^{
    2+\frac{\alpha}{n-1}}>0.
\end{align*}

We are now in a position to prove Theorems~\ref{thm1} and
\ref{thm2}. Since $\phi$ is a strictly increasing bijection
independent of $\beta$, two solutions $u_\beta$ and $u_\gamma$ of \eqref{radial1}
intersect if and only if the corresponding solutions $v_\beta$ and
$v_\gamma$ of \eqref{eq2} intersect. Therefore, it suffices to establish the claimed
separation structures for the family $\{v_\beta\}$.

\medskip

\textbf{Proof of Theorem~\ref{thm1}.}
We divide the proof into three steps.

\textbf{Step 1.} We prove that (III) holds for $1+4\alpha<n<10+4\alpha$, and that either (I) or (III) holds for $n=1+4\alpha$. Indeed, since $m\ge3$, a direct calculation shows that $m<10+4\ell_0$ if and only if $n<10+4\alpha$. Thus, by \eqref{Kasy0} and Theorem~\ref{BNthm1}, either (I) or (III) holds for $3\le n<10+4\alpha$.

We next exclude structure (I) when $n>1+4\alpha$ by applying
Theorem~\ref{BNcor1}.  Choose $\ell=(m-10)/4>-2$ and set $t=\phi^{-1}(r)$. Then $t'(r)=(\phi'(t))^{-1}$ and
\begin{align*}
    \frac{rK'(r)}{K(r)}
    &=(2(n-1)+\alpha)\frac{\phi(t)\coth t}{\phi'(t)}-2(m-1)\\
    &=(2(n-1)+\alpha)(m-2)(\sinh t)^{n-2}\cosh t\,T(t)-2(m-1).
\end{align*}
Since $t\to\infty$ as $r\to\infty$, it follows from \eqref{T:asyinfty} that
\begin{align*}
    \lim_{r\to\infty}\frac{rK'(r)}{K(r)}=\frac{m-2}{n-1}\alpha-2.
\end{align*}
Consequently,
\begin{align*}
    \lim_{r\to\infty}\left(\frac{rK'(r)}{K(r)}-\ell\right)=\frac{m-2}{n-1}\alpha-2-\frac{m-10}{4}=
    \frac{(m-2)\bigl(4\alpha-(n-1)\bigr)}
         {4(n-1)}<0
\end{align*}
when $n>1+4\alpha$. Hence, for some $r_0>0$,
\begin{align*}
    \frac{d}{dr}\bigl(r^{-\ell}K(r)\bigr)
    =r^{-\ell-1}K(r)\left(\frac{rK'(r)}{K(r)}-\ell\right)<0,
    \qquad r\ge r_0.
\end{align*}
Therefore, Theorem~\ref{BNcor1} yields either (II) or (III) for
$n>1+4\alpha$. Combined with the
preceding alternative, we conclude that (III) holds for
$1+4\alpha<n<10+4\alpha$. When $n=1+4\alpha$, the first conclusion
yields either (I) or (III) holds.

\textbf{Step 2.} We prove that (I) holds if $n<1+4\alpha$. In this range, $\alpha>(n-1)/4>0$, and so $\ell_\infty>-2$. Moreover, $m<10+4\ell_\infty$ is equivalent to $n<1+4\alpha$. Thus, by \eqref{Kinfty} and Theorem~\ref{BNthmC}, (I) follows.

\textbf{Step 3.} We prove (II) holds when $n\ge10+4\alpha$. Choose $\ell=\ell_0$ and set $t=\phi^{-1}(r)$. Then
\begin{align*}
    r^{-\ell}K(r)=\frac{(\sinh t)^{2(n-1)+\alpha}}{r^{\frac{(m-2)(2(n-1)+\alpha)}{n-2}}}
    =\bigl((m-2)(\sinh t)^{n-2}T(t)\bigr)^{
    \frac{2(n-1)+\alpha}{n-2}}.
\end{align*}
Moreover,
\begin{align*}
    T(t)=\int_{t}^{\infty}\frac{ds}{(\sinh s)^{n-1}}<\frac{1}{\cosh t}\int_t^\infty
    \frac{\cosh s}{(\sinh s)^{n-1}}\,ds
    =\frac{1}{(n-2)(\sinh t)^{n-2}\cosh t},
\end{align*}
and hence
\begin{align*}
\begin{aligned}
    \frac{d}{dt}(\sinh t)^{n-2}T(t)&=(n-2)(\sinh t)^{n-3}\cosh t\,T(t)-(\sinh t)^{n-2}(\sinh t)^{1-n}\\
    &=\frac{(n-2)(\sinh t)^{n-2}\cosh t\,T(t)-1}{\sinh t}<0.
 \end{aligned}   
\end{align*}
Since $t'(r)=(\phi'(t))^{-1}>0$, the function
$F(r):=r^{-\ell_0}K(r)$ is strictly decreasing on $(0,\infty)$.
Consequently,
$
    K_{\ell_0}(r)=\inf_{0<s<r}s^{-\ell_0}K(s)=F(r).
$
Moreover, $m\ge10+4\ell_0$ if and only if
$n\ge10+4\alpha$, and in this case
$
    \delta=\frac{m-2}{4(2+\ell_0)}\ge1.
$
Thus $$r^{-\ell_0}K(r)\le\delta K_{\ell_0}(r),$$ so
\eqref{nonincreasing} holds. Theorem~\ref{Bthm1} now yields (II) for
$n\ge10+4\alpha$. This completes the proof.\qed

\medskip

\textbf{Proof of Theorem~\ref{thm2}.}
For $n=2$, the argument in Step~2 of the proof of
Theorem~\ref{thm1} shows that (I) holds when $\alpha>1/4$.
If $-2<\alpha<1/4$, the calculation in Step~1 of the same
proof, with $\ell=(m-10)/4$, shows that
$r^{-\ell}K(r)$ is eventually decreasing. It follows from
Theorem~\ref{BNcor1} that either ${\rm (II)}$ or
${\rm (III)}$ holds.

It remains to consider the critical case $\alpha=1/4$. Set
$
    \ell=\frac{m-10}{4},
$
so that $m=10+4\ell$ and $\ell>-2$. Since $n=2$, we have $
    T(t)=\log\coth\frac{t}{2}.
$
Writing $t=\phi^{-1}(r)$ and using
$r^{m-2}=((m-2)T(t))^{-1}$, we obtain
\begin{equation}\label{criticalK}
\begin{aligned}
    r^{-\ell}K(r)
    &=(m-2)^{9/4}\bigl(\sinh t\,T(t)\bigr)^{9/4}.
\end{aligned}
\end{equation}
We claim that the right-hand side is strictly increasing in $t$.
Indeed,
\begin{align*}
    \frac{d}{dt}\bigl(\sinh t\,T(t)\bigr)
    =\cosh t\,T(t)-1>0.
\end{align*}
To verify the last inequality, observe that
\begin{align*}
    \left(T(t)-\frac{1}{\cosh t}\right)'
    =-\frac{1}{\sinh t\cosh^2t}<0,
    \qquad
    \lim_{t\to\infty}\left(T(t)-\frac{1}{\cosh t}\right)=0.
\end{align*}
Thus $T(t)>1/\cosh t$ for all $t>0$. Since $t=\phi^{-1}(r)$
is strictly increasing in $r$, \eqref{criticalK} shows that
$r^{-\ell}K(r)$ is strictly increasing. Moreover,
$\sinh t\,T(t)\to0$ as $t\to0^+$ by \eqref{T0}, while
$\sinh t\,T(t)\to1$ as $t\to\infty$ by
\eqref{T:asyinfty}. Therefore,
\begin{equation}\label{criticalKlimits}
    \lim_{r\to0^+}r^{-\ell}K(r)=0,
    \qquad
    \lim_{r\to\infty}r^{-\ell}K(r)=k_\infty
    =(m-2)^{9/4}.
\end{equation}

We next verify the following integrability condition
 \begin{equation}\label{criticalintegral}
    \int_1^\infty
    \bigl(k_\infty-r^{-\ell}K(r)\bigr)
    r^{(m-4)/2}\,dr<\infty.
\end{equation}
 In fact, we have 
\begin{align*}
    T(t)&=2e^{-t}+\frac{2}{3}e^{-3t}+O(e^{-5t}),\\
    \sinh t\,T(t)&=1-\frac{2}{3}e^{-2t}+O(e^{-4t})
    \qquad\text{as }t\to\infty.
\end{align*}
It follows from \eqref{criticalK} that
$
    k_\infty-r^{-\ell}K(r)=O(e^{-2t}),
$
Notice that 
$
r^{m-2}=\frac{1}{(m-2)T(t)}
\sim\frac{e^t}{2(m-2)},
$
we have
\[
\bigl(k_\infty-r^{-\ell}K(r)\bigr)r^{(m-4)/2}
=O\left(r^{-(3m-4)/2}\right)
\qquad\text{as }r\to\infty,
\]
and hence
\eqref{criticalintegral} holds. Theorem~\ref{Bcritical} then
shows that either ${\rm (II)}$ or ${\rm (III)}$ holds.

We finally exclude ${\rm (II)}$ by a scaling argument. Set
$
    q=2+\alpha=\frac94$, $
     \lambda_\beta=e^{-\beta/q}=e^{-4\beta/9}$,
and, for $s\ge0$, define the rescaled functions
\begin{align*}
    w_\beta(s)=u_\beta(\lambda_\beta s)-\beta,\qquad
    z_\beta(s)=u_{\beta+1}(\lambda_\beta s)-\beta.
\end{align*}
Since $\lambda_\beta^q e^\beta=1$, a direct computation shows that
both $w_\beta$ and $z_\beta$ satisfy
\begin{equation}\label{rescaledcritical}
    y''+\lambda_\beta\coth(\lambda_\beta s)y'
    +\left(\frac{\sinh(\lambda_\beta s)}{\lambda_\beta}
     \right)^{1/4}e^y=0,
\end{equation}
with the initial conditions
\begin{align*}
    w_\beta(0)=0,\quad z_\beta(0)=1,\qquad
    w_\beta'(0)=z_\beta'(0)=0.
\end{align*}

Since \eqref{rescaledcritical} is singular at $s=0$, we rewrite
it in integral form.
Let
$
    A_\lambda(s)=\frac{\sinh(\lambda s)}{\lambda}.
$
A solution of \eqref{rescaledcritical} with $y(0)=a$ satisfies
the integral equation
\begin{equation}\label{criticalintegraleq}
    y(s)=a-\int_0^s\frac{1}{A_\lambda(\tau)}
    \int_0^\tau A_\lambda(\sigma)^{5/4}e^{y(\sigma)}
    \,d\sigma\,d\tau.
\end{equation}
For every $R>0$,  define $A_\lambda(s)/s=1$ at $s=0$.
We have $A_\lambda(s)/s\to1$ uniformly on $[0,R]$ as
$\lambda\to0^+$. Moreover, for $0<\lambda\le1$ and
$0\le s\le R$, there is a constant $C_R>0$ such that
\begin{align*}
    s\le A_\lambda(s)\le C_Rs.
\end{align*}
Hence we have 
\begin{align*}
    y(s)\le a,
    \qquad
    |y'(s)|\le C_Re^a s^{5/4},
    \qquad 0\le s\le R.
\end{align*}
Integrating the estimate for $y'$,  it follows that 
\begin{align*}
    |y(s)-a|\le C_Re^a s^{9/4},\qquad 0\le s\le R.
\end{align*}
Applying the above estimates with $a=0,1$ and
$\lambda=\lambda_\beta$, we see that the families
$\{w_\beta\}$ and $\{z_\beta\}$ are uniformly bounded and
equicontinuous on every compact interval. Hence, by the
Arzelà--Ascoli theorem, every sequence $\beta_j\to\infty$
has a subsequence along which both $w_{\beta_j}$ and
$z_{\beta_j}$ converge uniformly on compact intervals.
Passing to the limit in \eqref{criticalintegraleq}, by dominated convergence shows that
each subsequential limit $W_a$, with $a\in\{0,1\}$, satisfies
\begin{equation}\label{Euclideancritical}
    W_a''+\frac1sW_a'+s^{1/4}e^{W_a}=0,\qquad s>0,
    \quad W_a(0)=a,\quad W_a'(0)=0.
\end{equation}
The unique solution of \eqref{Euclideancritical} is
\[
W_a(s)
=a-2\log\left(1+\frac{e^a}{2q^2}s^q\right),
\qquad q=\frac94.
\]
Thus 
$
w_\beta\rightarrow W_0$, $
z_\beta\rightarrow W_1
$
uniformly on compact subsets of \([0,\infty)\) as $\beta\to\infty$.

Let $D=W_1-W_0$. since $
D(0)=1$, and $\lim_{s\to\infty}D(s)=-1$, choose $S>0$ such that $D(S)<0$. We have 
$
z_\beta(S)-w_\beta(S)<0,
$
for all sufficiently large $\beta$, while
$z_\beta(0)-w_\beta(0)=1$. By continuity, there exists
$s_\beta\in(0,S)$ such that
$
z_\beta(s_\beta)=w_\beta(s_\beta),
$
or equivalently,
$
u_{\beta+1}(\lambda_\beta s_\beta)
=u_\beta(\lambda_\beta s_\beta).
$
Thus two distinct radial solutions intersect, excluding
structure ${\rm (II)}$. Together with the alternative obtained
from Theorem~\ref{Bcritical}, this proves that structure
${\rm (III)}$ holds when $\alpha=1/4$. \qed

\section{Proofs of Corollaries~\ref{cor1} and~\ref{cor2}}
We first show the following radial stability criterion.
\begin{lemma}\label{lem:radial-stability}
Let \(n\ge2\), \(\alpha>-2\), and let \(u\) be a radial solution of \eqref{eq1}. Then \(u\) is stable if and only if
\begin{equation}\label{eq:radial-stability}
\int_0^\infty
\left\{|\chi'(r)|^2-(\sinh r)^\alpha e^{u(r)}\chi(r)^2\right\}
(\sinh r)^{n-1}\,dr\ge0,
\quad\forall\chi\in C_c^\infty((0,\infty)).
\end{equation}
\end{lemma}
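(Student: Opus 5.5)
The forward direction (stable $\Rightarrow$ \eqref{eq:radial-stability}) is immediate. Given $\chi\in C_c^\infty((0,\infty))$, the function $\varphi(x)=\chi(r(x))$ is smooth on $\HH^n$ with compact support away from the origin. In geodesic polar coordinates $dV_{\HH^n}=(\sinh r)^{n-1}dr\,d\sigma$ and $|\nabla_{\HH^n}\varphi|^2=|\chi'|^2$. Hence $Q_u(\varphi)=|\mathbb S^{n-1}|$ times the left-hand side of \eqref{eq:radial-stability}, and that quantity is therefore nonnegative.

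For the converse we decompose a general test function in spherical harmonics. Write $W(r)=(\sinh r)^\alpha e^{u(r)}$, which depends only on $r$ because $u$ is radial. Take $\varphi\in C_c^\infty(\HH^n)$ and an $L^2(\mathbb S^{n-1})$-orthonormal basis of spherical harmonics $\{Y_k\}$ with eigenvalues $\lambda_k\ge0$, where $Y_0$ is constant and $\lambda_0=0$. Set $\varphi_k(r)=\int_{\mathbb S^{n-1}}\varphi(r,\theta)Y_k(\theta)\,d\theta$. Parseval on each sphere gives
\[
Q_u(\varphi)=\sum_k\int_0^\infty\Bigl(|\varphi_k'|^2+\frac{\lambda_k}{\sinh^2 r}\varphi_k^2-W\varphi_k^2\Bigr)(\sinh r)^{n-1}dr,
\]
with the interchange of sum and integral justified by smoothness of $\varphi$. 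Every term with $k\ge1$ is at least the corresponding term with $\lambda_k$ dropped. It therefore suffices to show that the one-dimensional form
\[
q(\chi)=\int_0^\infty\bigl(|\chi'|^2-W\chi^2\bigr)(\sinh r)^{n-1}dr
\]
is nonnegative for each $\chi=\varphi_k$. These $\varphi_k$ are smooth on $(0,\infty)$ with compact support in $[0,R)$, but they need not vanish near $r=0$, so \eqref{eq:radial-stability} does not apply to them directly.

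The main obstacle is this removal of the origin, which we handle with a capacity cutoff. For $n\ge3$, let $\eta_\varepsilon(r)$ vanish on $(0,\varepsilon)$, equal $1$ on $(\sqrt\varepsilon,\infty)$, and equal $\log(r/\varepsilon)/\log(1/\sqrt\varepsilon)$ in between (or use an even simpler linear cutoff). For $n=2$, use the same logarithmic cutoff, since points have zero capacity in dimension two. A short computation gives
\[
\int|\eta_\varepsilon'|^2(\sinh r)^{n-1}dr\to0\quad\text{as }\varepsilon\to0,
\]
in every dimension $n\ge2$. After mollification, $\chi\eta_\varepsilon$ is admissible in \eqref{eq:radial-stability}, so $q(\chi\eta_\varepsilon)\ge0$.

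Passing to the limit, $\chi$ and $\chi'$ are bounded near $0$, so $\int|(\chi\eta_\varepsilon)'-\chi'|^2(\sinh r)^{n-1}\to0$. This uses the cutoff energy estimate together with the bound $\int_0^{\sqrt\varepsilon}|\chi'|^2 r^{n-1}\to0$. The potential term converges by dominated convergence, because $W(\sinh r)^{n-1}\sim r^{\alpha+n-1}e^{\beta}$ is integrable near $0$ when $\alpha>-2$ and $n\ge2$; here $u$ is bounded near $0$ since $u(0)=\beta$. Thus $q(\chi)\ge0$ for each $\varphi_k$, and summing gives $Q_u(\varphi)\ge0$. The only delicate points are the vanishing of the cutoff energy and the integrability of $W$ at the origin, both of which are elementary.
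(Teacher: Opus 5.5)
Your proof is correct, but it is organized differently from the paper's. The paper never expands in spherical harmonics. It multiplies the test function $\varphi\in C_c^\infty(\HH^n)$ itself by the smooth cutoff $\eta_\varepsilon(r)=h\bigl(\log(r/\varepsilon^2)/|\log\varepsilon|\bigr)$, which vanishes on $[0,\varepsilon^2]$ and equals $1$ on $[\varepsilon,\infty)$. It shows $\varphi_\varepsilon=\eta_\varepsilon\varphi\to\varphi$ in $H^1(\HH^n)$, together with convergence of the potential term. Then, for each fixed $\theta\in\mathbb S^{n-1}$, it applies \eqref{eq:radial-stability} directly to the ray function $r\mapsto\varphi_\varepsilon(r,\theta)$, which already lies in $C_c^\infty((0,\infty))$. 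Integrating over $\theta$ and discarding $(\sinh r)^{-2}|\nabla_{\mathbb S^{n-1}}\varphi_\varepsilon|^2$ gives $Q_u(\varphi_\varepsilon)\ge0$, and one lets $\varepsilon\to0^+$. So the angular energy is dropped pointwise in $\theta$ rather than mode by mode, and the origin is removed once in $\HH^n$ rather than separately for each coefficient $\varphi_k$.

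The ray-wise route is shorter. It needs no orthonormal basis, no Parseval identity for $\nabla_{\mathbb S^{n-1}}$, and no Tonelli argument to exchange $\sum_k$ with $\int dr$ (your phrase ``justified by smoothness'' should really be that Tonelli argument). Because $h$ is smooth, it also needs no mollification. Your decomposition gives more than the lemma requires, namely the exact identity $Q_u(\varphi)=\sum_k\bigl[q(\varphi_k)+\int_0^\infty\lambda_k\varphi_k^2(\sinh r)^{n-3}\,dr\bigr]$. This shows that every mode sees the same one-dimensional form $q$ plus a nonnegative Hardy-type term, and it would be the natural starting point for a finer mode-wise spectral analysis.

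One small correction: you justify integrability of $(\sinh r)^{\alpha+n-1}e^{u}$ at the origin by boundedness of $u$ via $u(0)=\beta$, which is specific to $u_\beta$. The lemma is stated for an arbitrary radial (weak) solution. The needed integrability is simply part of the definition, $(\sinh r)^\alpha e^u\in L^1_{\rm loc}(\HH^n)$, and that is what the paper uses. With that substitution your argument covers the lemma as stated.
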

\textbf{Proof.} Necessity follows by taking radial test functions
$\varphi(x)=\chi(r(x))$ in the stability inequality. Conversely, suppose that
\eqref{eq:radial-stability} holds, and let
$\varphi\in C_c^\infty(\HH^n)$. Choose $h\in C^\infty(\RR)$ with
$0\le h\le1$, $h=0$ on $(-\infty,0]$, and $h=1$ on
$[1,\infty)$. For $0<\varepsilon<1/2$, define
\begin{equation*}
\eta_\varepsilon(r)
=h\left(\frac{\log\bigl(r/\varepsilon^2\bigr)}{|\log\varepsilon|}\right),
\qquad
\varphi_\varepsilon(x)=\eta_\varepsilon(r(x))\varphi(x).
\end{equation*}
Then $\eta_\varepsilon=0$ on $[0,\varepsilon^2]$, $\eta_\varepsilon=1$ on $[\varepsilon,\infty)$, and
\begin{equation}\label{eq:pole-cutoff-energy}
\int_0^\infty
|\eta_\varepsilon'(r)|^2
(\sinh r)^{n-1}\,dr
\leq
\frac{C}{|\log\varepsilon|^2}
\int_{\varepsilon^2}^{\varepsilon}\frac{dr}{r}
\leq
\frac{C}{|\log\varepsilon|}
\rightarrow0\quad\text{as }\varepsilon\to0^+.
\end{equation}
Since $\varphi_\varepsilon-\varphi$ is supported in
$B_\varepsilon$ and
$
    \nabla_{\HH^n}(\varphi_\varepsilon-\varphi)
    =(\eta_\varepsilon-1)\nabla_{\HH^n}\varphi
      +\varphi\nabla_{\HH^n}\eta_\varepsilon,
$
it follows from  \eqref{eq:pole-cutoff-energy}  that
$\varphi_\varepsilon\to\varphi$ strongly in $H^1(\HH^n)$.
Since $(\sinh r)^{\alpha}\exp(u)\in L_{\mathrm{loc}}^1(\HH^n)$,
the dominated convergence theorem yields
\begin{equation*}
\int_{\HH^n}(\sinh r)^{\alpha}\exp(u)\varphi_\varepsilon^2\,dV_{\HH^n}
\rightarrow
\int_{\HH^n}(\sinh r)^{\alpha}\exp(u)\varphi^2\,dV_{\HH^n}\quad\text{as }\varepsilon\to0^+.
\end{equation*}
For each fixed $\theta\in\mathbb S^{n-1}$,
$\varphi_\varepsilon(r,\theta)\in C_c^\infty((0,\infty))$.
Applying \eqref{eq:radial-stability} to
$\varphi_\varepsilon(r,\theta)$ and integrating over
$\mathbb S^{n-1}$, we obtain
\begin{equation*}
\begin{aligned}
0&\le
\int_{\HH^n}
\left[|\partial_r\varphi_\varepsilon|^2
-(\sinh r)^{\alpha}e^u\varphi_\varepsilon^2\right]\,dV_{\HH^n}\\
&\le
\int_{\HH^n}
\left[|\nabla_{\HH^n}\varphi_\varepsilon|_{\HH^n}^2
-(\sinh r)^{\alpha}e^u\varphi_\varepsilon^2\right]\,dV_{\HH^n}.
\end{aligned}
\end{equation*}
Passing to the limit as $\varepsilon\to0^+$ shows that
$Q_u(\varphi)\ge0$. Hence $u$ is stable.\qed

Since $K e^v\in L_{\mathrm{loc}}^1(\RR^m)$, the same argument in
Euclidean polar coordinates shows that a radial solution $v$ of
\eqref{eq2} is stable in $\RR^m$ if and only if
\begin{equation}\label{eq:Euclidean-radial-stability}
    \int_0^\infty
    \left\{|\psi'(s)|^2-K(s)e^{v(s)}\psi(s)^2\right\}
    s^{m-1}\,ds\ge0
    \quad\forall\psi\in C_c^\infty((0,\infty)).
\end{equation}

We are now in a position to prove Corollaries~\ref{cor1} and
\ref{cor2}.

\textbf{Proof of Corollaries~\ref{cor1} and~\ref{cor2}.}
Fix an arbitrary $\chi\in C_c^\infty((0,\infty))$ and define
$\widetilde\chi(s)=\chi(\phi^{-1}(s))$. Using $s=\phi(r)$ and
\eqref{phi:der}, we obtain
\begin{align*}
&\int_0^\infty
 \left\{|\chi'(r)|^2-(\sinh r)^\alpha
 e^{u_\beta(r)}\chi(r)^2\right\}(\sinh r)^{n-1}\,dr
 \\
&\qquad=\int_0^\infty
 \left\{|\widetilde\chi'(s)|^2-K(s)e^{v_\beta(s)}
 \widetilde\chi(s)^2\right\}s^{m-1}\,ds.
\end{align*}
Since $\phi$ is a diffeomorphism, the map
$\chi\mapsto\widetilde\chi$ is a bijection of
$C_c^\infty((0,\infty))$ onto itself. Lemma~\ref{lem:radial-stability}
and its Euclidean analogue \eqref{eq:Euclidean-radial-stability}
therefore show that $u_\beta$ is stable if and only if $v_\beta$ is
stable.

By Theorem~\ref{Bcor}, $v_\beta$ is stable if and only if
\begin{align*}
    v_\gamma(r)<v_\beta(r)
    \quad\text{for all }r\ge0\text{ and every }\gamma<\beta.
\end{align*}

If the solution family has structure (I), then every $v_\beta$
intersects a solution $v_\gamma$ with $\gamma<\beta$, so no
$v_\beta$ is stable. 
Under structure
(II), the initial ordering
$v_\gamma(0)<v_\beta(0)$ is preserved whenever $\gamma<\beta$,
so every solution is stable.
Suppose now that structure (III) holds with threshold $\beta^*$.
If $\beta\le\beta^*$, then $v_\beta$ does not intersect any
$v_\gamma$ with $\gamma<\beta$, and hence it is stable. If
$\beta>\beta^*$, choose $\gamma\in(\beta^*,\beta)$. Since
$v_\gamma$ and $v_\beta$ intersect, $v_\beta$ is unstable.
 The conclusions now follow from Theorems~\ref{thm1} and~\ref{thm2}.\qed

\appendix
\section{Euclidean separation results}
In this section, we introduce some known results on radial solutions which satisfy 
\begin{align*}
\begin{cases}
        v''(r)+\frac{m-1}{r}v'(r)+K(r)\exp(v(r))=0,\quad r>0,\\
        v(0)=\beta,
        \end{cases}
\end{align*}
where $K\in C((0,\infty))$ satisfies $K(r)>0$ for $r\in(0,\infty)$, $
\int_{0}^1rK(r)\,dr<\infty$, and $r^2K(r)\to0$ as $r\to0^+$.
\begin{theorem}\cite[Theorem~1.3]{BN}\label{BNthm1}
Let $m<10+4\ell_0$ with $\ell_0>-2$. Suppose that
\begin{equation}\label{Eucli:asy0}
    \lim_{r\to0^+} r^{-\ell_0}K(r)=k_0\quad \text{for some } k_0>0.
\end{equation}
Then either (I) or (III) holds.
\end{theorem}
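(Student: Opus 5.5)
The plan is to reduce the statement to the pure power case by scaling, and then compare with the singular solution of the limiting Euclidean problem. For each $\beta$ set $\lambda=e^{-\beta/(2+\ell_0)}$ and $w_\beta(s)=v_\beta(\lambda s)-\beta$. Then $w_\beta$ solves
\begin{equation*}
w''+\frac{m-1}{s}w'+\lambda^{-\ell_0}K(\lambda s)\,e^{w}=0,\qquad w(0)=0 .
\end{equation*}
By \eqref{Eucli:asy0}, $\lambda^{-\ell_0}K(\lambda s)\to k_0 s^{\ell_0}$ locally uniformly on $(0,\infty)$ as $\beta\to\infty$. The integrability $\int_0^1 rK<\infty$ together with the integral form of the equation gives uniform local bounds, exactly as in the rescaling argument used in the proof of Theorem~\ref{thm2}. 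Hence $w_\beta\to W$ locally uniformly, where $W$ is the regular solution of $W''+\frac{m-1}{s}W'+k_0s^{\ell_0}e^W=0$ with $W(0)=0$. The same convergence holds for $z_\beta(s)=v_{\beta+c}(\lambda s)-\beta$, with limit $W_c$.

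The key input is the classical Joseph--Lundgren type fact for the pure power Euclidean problem: when $m<10+4\ell_0$, every regular solution intersects the singular solution
\begin{equation*}
\Phi(s)=-(2+\ell_0)\log s+\log\frac{(2+\ell_0)(m-2)}{k_0}
\end{equation*}
infinitely many times. Equivalently, any two distinct regular solutions intersect, and in fact infinitely often. This follows from the Emden--Fowler change of variables $s=e^{t}$, $W=\Phi+y$, which gives an autonomous damped oscillator whose linearization at $y=0$ has complex roots exactly when $m<10+4\ell_0$. Since $W$ and $W_c$ cross transversally, and crossings are stable under locally uniform $C^1$ convergence, $v_\beta$ and $v_{\beta+c}$ intersect for all large $\beta$. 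A compactness argument over a bounded set of $c$ values, uniform on compacta, should upgrade this to: there is $B$ such that $v_\beta$ and $v_\gamma$ intersect whenever $\beta,\gamma>B$ and $\beta\neq\gamma$.

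Next I would show that the set of $\beta$ for which $v_\beta$ is separated from every $v_\gamma$ with $\gamma<\beta$ is an interval $(-\infty,\beta^*]$ or empty. I would use the ordering and stability correspondence (Theorem~\ref{Bcor}-type): nonintersection from below is equivalent to stability, which is closed under limits and inherited downward, because a solution below a stable one in the ordered region stays ordered. Then set $\beta^*=\sup$ of that set. If the set is empty, we have (I). Otherwise (III) holds, provided we also show that any two solutions with $\beta,\gamma>\beta^*$ intersect.

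The main obstacle is this last point, namely upgrading "large $\beta$ intersect" to "all pairs above $\beta^*$ intersect", and excluding pairs above $\beta^*$ that do not intersect. To handle it I would use a zero-number/monotonicity argument on $\partial_\beta v_\beta$, which solves the linearized equation. $v_\beta$ fails to intersect solutions below it precisely while $\partial_\beta v_\beta>0$ for all $r$. Once $\partial_\beta v_\beta$ acquires a zero, the zero persists for larger $\beta$, by the Sturm comparison and the fact that zeros cannot be born at $r=\infty$ thanks to the oscillatory behavior at infinity inherited from the scaling limit. If this fails, I would instead quote the corresponding step from \cite{BN} directly.
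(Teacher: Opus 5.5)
The paper does not prove this statement; it quotes it from \cite{BN}. Your argument therefore has to stand on its own, and it has a genuine gap at the step that actually decides between (I) and (III).

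Your scaling step is sound, and you need less from it than you claim. It suffices that $v_\beta$ and $v_{\beta+1}$ intersect for one large $\beta$; this makes $v_{\beta+1}$ unstable by Theorem~\ref{Bcor}. Your compactness over a bounded set of $c$ is both unnecessary and insufficient, since it controls neither $c\to0^+$ nor $c\to\infty$.

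The gap comes after that. An empty stable set $\Lambda$ only says that each $v_\beta$ meets \emph{some} $v_\gamma$ with $\gamma<\beta$. It does not say that \emph{every} pair meets, so ``if the set is empty, we have (I)'' is unjustified. For the same reason, instability of every $\beta>\beta^*$ does not force $v_\gamma$ and $v_\beta$ to meet for a given pair $\beta^*<\gamma<\beta$. Your proposed remedy, persistence of zeros of $\partial_\beta v_\beta$, would at best show that the unstable set is upward closed. That already follows from Theorem~\ref{Bcor} and $Ke^{v_\gamma}<Ke^{v_\beta}$, and it says nothing about a specific pair. Moreover, the appeal to ``oscillatory behavior at infinity inherited from the scaling limit'' has no basis: the rescaling only probes $r\to0^+$, and nothing is assumed about $K$ at infinity.

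The missing idea is the following lemma: \emph{if $\gamma<\beta$ and $v_\gamma<v_\beta$ on $[0,\infty)$, then $v_\gamma$ is stable.} To prove it, note that $w=v_\beta-v_\gamma>0$ solves $(r^{m-1}w')'+r^{m-1}Kc\,w=0$ with $c=(e^{v_\beta}-e^{v_\gamma})/(v_\beta-v_\gamma)>e^{v_\gamma}$. For $\psi=w\eta\in C_c^\infty((0,\infty))$, the Picone identity gives $\int_0^\infty(\psi'^2-Kc\,\psi^2)r^{m-1}\,dr=\int_0^\infty w^2\eta'^2r^{m-1}\,dr\ge0$. Since $c>e^{v_\gamma}$, this yields \eqref{eq:Euclidean-radial-stability} for $v_\gamma$.

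With this lemma the argument closes as follows.
\begin{enumerate}[label=(\alph*),leftmargin=*,nosep]
\item $\Lambda$ is downward closed (via Theorem~\ref{Bcor}) and closed.
\item $\Lambda$ is bounded above by the scaling step.
\item Any ordered pair puts its lower member in $\Lambda$, so $\Lambda=\emptyset$ yields (I).
\item If $\Lambda=(-\infty,\beta^*]$, pairs below $\beta^*$ do not intersect by Theorem~\ref{Bcor}, and pairs above $\beta^*$ intersect by the lemma; this is (III).
\end{enumerate}
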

\begin{theorem}\cite[Corollary~1.1]{BN}\label{BNcor1}
Let $m\ge 10+4\ell$ with $\ell>-2$. Assume that $r^{-\ell}K(r)$ is nonincreasing for $r\ge r_0$ with some $r_0>0$. Then either (II) or (III) holds.
\end{theorem}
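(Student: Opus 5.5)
The plan is to pass through the stability characterization (Theorem~\ref{Bcor}): $v_\beta$ is stable if and only if $v_\gamma<v_\beta$ on $[0,\infty)$ for every $\gamma<\beta$. Let $S=\{\beta\in\RR: v_\beta \text{ is stable}\}$. The corollary then reduces to two claims. First, $S\ne\emptyset$; more precisely, $(-\infty,\beta_0]\subset S$ for some $\beta_0$. Second, $S$ is an interval of the form $(-\infty,\beta^*]$ or all of $\RR$. If $S=\RR$, every pair is ordered, which is (II). Otherwise, with $\beta^*=\sup S$, pairs below $\beta^*$ are ordered and $v_\beta$ meets some $v_\gamma$, $\gamma<\beta$, for each $\beta>\beta^*$. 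Upgrading this to ``any two above $\beta^*$ intersect'' is the usual BN argument: if $\beta^*<\gamma<\beta$ and $v_\gamma,v_\beta$ were ordered, one builds a stable solution above $\beta^*$ by a sub/supersolution sandwich, a contradiction. That gives (III). Either way, (I) is excluded, and that exclusion is the real content of the statement.

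\textbf{Step 1 (downward closedness and closedness of $S$).} Closedness follows from continuous dependence of $v_\beta$ on $\beta$ on compacts. The stability quadratic form also passes to limits for fixed $\psi\in C_c^\infty$. Downward closedness should come from the ordered family: if $v_\beta$ is stable and $\gamma<\beta$, then $v_\gamma<v_\beta$. Hence $Ke^{v_\gamma}<Ke^{v_\beta}$, so the quadratic form for $v_\gamma$ dominates that of $v_\beta$, and $v_\gamma$ is stable. This step is clean.

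\textbf{Step 2 (stability for $\beta\ll0$) --- the main obstacle.} By the Hardy inequality in $\RR^m$, it suffices to show
\[
r^2K(r)e^{v_\beta(r)}\le\frac{(m-2)^2}{4}\qquad\text{for all } r>0,
\]
when $\beta$ is very negative.

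On $(0,r_0]$ this holds because $v_\beta\le\beta$ and $r^2K$ is bounded there, using $r^2K\to0$ at $0$ and continuity. The difficulty is $r\ge r_0$.

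\emph{Reduction to an autonomous system.} Write $t=\log r$ and $w(t)=v_\beta(e^t)+(2+\ell)t+\log\bigl(r^{-\ell}K(r)\bigr)$. Then $r^2Ke^{v}=e^{w}$, and $w$ satisfies the Emden--Fowler system
\[
w''+(m-2)w'+e^{w}=(m-2)(2+\ell)+\text{(terms from } (\log r^{-\ell}K)'\text{)}.
\]
The monotonicity hypothesis makes the extra terms act with a favorable sign for $t\ge\log r_0$.

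\emph{Comparison with the singular solution.} The singular equilibrium is $e^{w_\infty}=(2+\ell)(m-2)$. The hypothesis $m\ge10+4\ell$ is exactly $(2+\ell)(m-2)\le(m-2)^2/4$. So it suffices to show that $w$ stays below $w_\infty$ for all time. I would do this with a Lyapunov/energy function for the damped system,
\[
E=\tfrac12 w'^2+e^{w}-(m-2)(2+\ell)w.
\]
This $E$ is nonincreasing along trajectories when $r^{-\ell}K$ is nonincreasing. For $\beta\ll0$, the trajectory enters $t\ge\log r_0$ with $w$ very negative and $w'$ bounded, near $2+\ell$. I must then check that the resulting energy sublevel set lies in $\{w<w_\infty\}$. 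In the case $m\ge10+4\ell$ the equilibrium is a stable node, not a focus, so the trajectory approaches $w_\infty$ monotonically from below. A phase-plane invariant-region argument (the wedge below the eigen-direction of the node) should give $w<w_\infty$ for all $t$. This phase-plane estimate is where the details are delicate, and it is where I would spend the effort.

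\textbf{Step 3 (conclusion).} Combining Steps 1 and 2 gives $S\supset(-\infty,\beta_0]$ with $S$ a closed down-set. Theorem~\ref{Bcor} then converts this into (II) when $S=\RR$, and into (III) with $\beta^*=\sup S$ otherwise.
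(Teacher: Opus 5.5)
The paper gives no proof of this statement; it imports it from Bae--Naito, so your argument has to stand on its own. The overall architecture is fine. With $S$ the set of stable $\beta$, Theorem~\ref{Bcor} together with $Ke^{v_\gamma}<Ke^{v_\beta}$ makes $S$ a down-set. Continuous dependence makes it closed. Then (II) or (III) follows once $S\neq\emptyset$. The upgrade in Step~3 does not need a sandwich. If $\beta^*<\gamma<\beta$ and $v_\gamma<v_\beta$, then $z=v_\beta-v_\gamma>0$ satisfies $-\Delta z=K\frac{e^{v_\beta}-e^{v_\gamma}}{v_\beta-v_\gamma}\,z\ge Ke^{v_\gamma}z$. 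The substitution $\psi=z\eta$ in the quadratic form then shows $v_\gamma$ is stable, contradicting $\gamma\notin S$.

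The genuine gap is Step~2, which carries all the content of the statement, and the tools you name there fail.

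\emph{The energy is not monotone.} Your $w$ contains $h(t)=\log g(e^t)$ with $g(r)=r^{-\ell}K(r)$, so its equation carries $h''+(m-2)h'$, and
$E'=-(m-2)w'^2+w'\bigl(h''+(m-2)h'\bigr)$.
The hypothesis only gives the sign of $h'$. The term $h''$ has no sign, and it is not even a function, since $K$ is merely continuous. So $E$ is not a Lyapunov function.

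\emph{A monotone energy would not help either.} For $\beta\ll0$ the entry energy is about $-(m-2)(2+\ell)\,w(\log r_0)\to+\infty$. The corresponding sublevel set therefore reaches far above $w_\infty$, and the estimate degenerates exactly in the regime you need.

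\emph{The wedge argument is only autonomous.} The node/wedge argument is a fact about the autonomous phase plane, and you never say where the monotonicity of $g$ enters.

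The missing idea is to keep $g$ out of the unknown and let the equilibrium move. Set $\tilde w(t)=v_\beta(e^t)+(2+\ell)t$, $G(t)=g(e^t)$ and $A=(m-2)(2+\ell)$. Then $\tilde w''+(m-2)\tilde w'+Ge^{\tilde w}=A$ and $r^2Ke^{v_\beta}=Ge^{\tilde w}$. Let $c(t)=\log(A/G(t))$, which is nondecreasing for $t\ge\log r_0$; your target $w<w_\infty$ is exactly $\tilde w<c(t)$. Consider the moving wedge $R(t)=\{\tilde w<c(t),\ 0<\tilde w'<\frac{m-2}{2}(c(t)-\tilde w)\}$. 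It is forward invariant:
\begin{itemize}
\item On the edge $\tilde w'=0$ one has $\tilde w''=A(1-e^{\tilde w-c})>0$.
\item On the slanted edge, write $y=c-\tilde w$. The $C^1$ part $\tilde w'+\frac{m-2}{2}\tilde w$ of $\tilde w'-\frac{m-2}{2}y$ has derivative $A(1-e^{-y})-\frac{(m-2)^2}{4}y<0$, precisely because $m\ge10+4\ell$. The remaining part $-\frac{m-2}{2}c$ is nonincreasing, so it only helps.
\item Inside $R(t)$, $e^{(m-2)t/2}y$ is nondecreasing, so $\tilde w$ never reaches $c$.
\end{itemize}
Since $v_\beta(r_0)\le\beta$ and $r_0v_\beta'(r_0)=O(e^\beta)$, the trajectory starts in $R(\log r_0)$ when $\beta\ll0$. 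This gives $r^2Ke^{v_\beta}<A\le(m-2)^2/4$ on $[r_0,\infty)$. Combined with your bound on $(0,r_0]$ and Hardy's inequality, you get $(-\infty,\beta_0]\subset S$, which is the step your proposal leaves open.
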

\begin{theorem}\cite[Theorem~1.2,Proposition~4.1]{B}\label{BNthmC}
    Let $m<10+4\ell$ for some $\ell>-2$. Assume that $K(r)$ satisfies 
    \begin{align*}
        \lim_{r\to\infty}r^{-\ell}K(r)=k_\infty\quad \text{for some } k_\infty>0.
    \end{align*}
    Then (I) holds.
\end{theorem}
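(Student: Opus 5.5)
This result is quoted from \cite{B}, so the plan below reconstructs the standard argument rather than a new one. Let $m<10+4\ell$ with $\ell>-2$ and $r^{-\ell}K(r)\to k_\infty>0$ as $r\to\infty$. The goal is to show that any two distinct regular solutions $v_\beta$ and $v_\gamma$ intersect.

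\textbf{Step 1: Emden--Fowler variables.} Set $t=\log r$ and $w(t)=v(e^t)+(2+\ell)t$. Then
\[
w''+(m-2)w'-(2+\ell)(m-2)+\widetilde K(t)e^{w}=0,
\qquad \widetilde K(t)=e^{-\ell t}K(e^t)\to k_\infty \text{ as } t\to\infty.
\]
The limiting autonomous equation has the unique equilibrium $w_\infty=\log\bigl((2+\ell)(m-2)/k_\infty\bigr)$. Linearizing there gives the characteristic equation
\[
\mu^2+(m-2)\mu+(2+\ell)(m-2)=0.
\]
Its discriminant is $(m-2)(m-10-4\ell)$, which is negative precisely when $m<10+4\ell$. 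So the equilibrium is a stable focus.

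\textbf{Step 2: convergence to the equilibrium.} The key step is to show that every regular solution satisfies $w(t)\to w_\infty$ and $w'(t)\to0$ as $t\to\infty$. I would use the energy
\[
E=\tfrac12 (w')^2+k_\infty e^{w}-(2+\ell)(m-2)w.
\]
Its derivative is $-(m-2)(w')^2$ plus a perturbation term $(k_\infty-\widetilde K)e^w w'$, which tends to zero relative to the main term.

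First I would obtain global bounds on $w$ and $w'$:
\begin{itemize}
\item A lower bound comes from the flux identity, since $r^{m-1}v'=-\int_0^r s^{m-1}Ke^{v}\,ds$.
\item An upper bound comes from a comparison argument of Pohozaev type: $e^{w}$ cannot stay large because $\widetilde K$ is bounded below for large $t$.
\end{itemize}
With these bounds in hand, a LaSalle-type argument applied to the $\omega$-limit set, which is invariant under the limiting autonomous flow, gives convergence to the unique equilibrium.

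\textbf{Step 3: oscillation and intersection.} For two distinct solutions, put $W=w_\beta-w_\gamma$. It satisfies the linear equation
\[
W''+(m-2)W'+\widetilde K(t)\,c(t)\,W=0,\qquad c(t)=\frac{e^{w_\beta}-e^{w_\gamma}}{w_\beta-w_\gamma}.
\]
By Step 2, $\widetilde K c\to(2+\ell)(m-2)$. This limiting equation is oscillatory because the discriminant is negative. Sturm comparison with a constant-coefficient equation having slightly smaller coefficient, which still has complex roots, then shows that $W$ has infinitely many zeros. Hence $v_\beta$ and $v_\gamma$ intersect, indeed infinitely often, and (I) holds.

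\textbf{Main obstacle.} The hard part is Step 2, the uniform control of $w$ for an arbitrary regular solution. In particular, one must exclude $w\to-\infty$, which in the original variables means $v$ decaying faster than $-(2+\ell)\log r$. I would handle this through the flux identity: it shows $r^{m-1}v'$ stays bounded by $\int_0^r s^{m-1+\ell}e^{v}\,ds$. If that integral stayed bounded, then $v\ge -C$ for large $r$, because $m>2$. That would force $\int_0^r s^{m-1+\ell}e^{v}\,ds\to\infty$, a contradiction.
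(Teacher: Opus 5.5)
The paper gives no proof of this statement; it is quoted from \cite{B}. Your overall route is sound. Steps 1 and 3 are correct. The linearization at $w_\infty$ is a stable focus exactly when $m<10+4\ell$. Substituting $Z=e^{(m-2)t/2}(w_\beta-w_\gamma)$ gives $Z''+(\widetilde K c-(m-2)^2/4)Z=0$, whose coefficient tends to $(m-2)(10+4\ell-m)/4>0$, so Sturm comparison yields infinitely many zeros. The LaSalle step is also fine \emph{provided} the forward orbit $(w,w')$ is bounded. In that case the $\omega$-limit set is compact and invariant for the limit flow. Since $E$ strictly decreases along nonconstant orbits and there is only one equilibrium, that equilibrium is the only compact invariant set.

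The gap is that boundedness, which you call the main obstacle but do not prove.

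\textbf{Lower bound.} Your flux argument only shows $\int_0^\infty s^{m-1}Ke^{v}\,ds=\infty$. Infinite mass is compatible with $w\to-\infty$ slowly: $w\approx-\log t$ still gives $\int^r s^{m-3}e^{w(\log s)}\,ds=\infty$. So it gives no lower bound on $w$.

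\textbf{Upper bound.} The phrase ``$e^w$ cannot stay large'' gives at most $\liminf w<\infty$, not $\sup w<\infty$.

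\textbf{Energy.} The claim that $(k_\infty-\widetilde K)e^ww'$ is small relative to $(m-2)(w')^2$ fails where $w'$ is small, so $E$ gives no a priori control either.

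\textbf{Missing device.} What is missing is the first-order system. With $r=e^t$ and $g(t)=r^{2-m}\int_0^r s^{m-1}Ke^{v}\,ds\ge0$, you have
\[
w'=(2+\ell)-g,\qquad g'=-(m-2)g+\widetilde K(t)e^{w}.
\]
For the upper bound, use $w'\le 2+\ell$. Suppose $w$ first reaches a high level $U$ at some time $t_1$, late enough that $\widetilde K\ge k_\infty/2$ there. Then $w\ge U-1$ on $[t_1-1/(2+\ell),t_1]$. This forces $g(t_1)\ge ce^{U}>2+\ell$, hence $w'(t_1)<0$, contradicting a first upward crossing. So $\sup w<\infty$, and consequently $0\le g\le G$.

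For the lower bound, suppose $w\le -L$ with $L$ large. Then $g'\le-(m-2)g+Ce^{-L}$, so $g$ drops below $2+\ell$ within a fixed time $\tau$, during which $w$ falls by at most $G\tau$. After that $w'>0$ for as long as $w\le-L$. This bounds $w$ from below, and $w'$ is bounded since $0\le g\le G$.

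With these bounds in place, your Steps 2 and 3 go through as written.
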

\begin{theorem}\cite[Theorem~1.5]{B}\label{Bthm1}
    Let $m\ge 10+4\ell$ for some $\ell>-2$. Assume that $K(r)$ satisfies
\begin{equation}\label{nonincreasing}
    r^{-\ell}K(r)\le \delta K_\ell(r)
    \quad\text{for } r>0,
\end{equation}
where $K_{\ell}(r):=\inf_{0<s<r}s^{-\ell}K(s)$ and $\delta=\frac{m-2}{4(2+\ell)}\ge1$. Then (II) holds.
\end{theorem}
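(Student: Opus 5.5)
The plan is to deduce separation from a uniform stability-type bound on every regular solution. The guiding model is the pure power weight $K=k r^{\ell}$. There the singular solution is $v_s=\log\frac{(2+\ell)(m-2)}{k}-(2+\ell)\log r$, for which $K e^{v_s}=(2+\ell)(m-2)r^{-2}$. The Hardy constant in $\RR^m$ is $(m-2)^2/4$, and the condition $(2+\ell)(m-2)\le (m-2)^2/4$ is exactly $m\ge 10+4\ell$, i.e.\ $\delta\ge1$.

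I would carry out the argument in three steps.

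\emph{Step 1 (pointwise bound).} I aim to prove that every regular solution $v=v_\beta$ satisfies
\begin{equation*}
K(r)e^{v(r)}<\frac{(m-2)^2}{4r^2},\qquad r>0.
\end{equation*}
Set $z(t)=v(e^t)+(2+\ell)t$ and $g(t)=e^{-\ell t}K(e^t)$. Then
\begin{equation*}
z_{tt}+(m-2)z_t+g(t)e^{z}=(2+\ell)(m-2),\qquad z(t)\to-\infty\ (t\to-\infty),
\end{equation*}
and the desired bound reads $g(t)e^{z(t)}<(m-2)^2/4$. By hypothesis \eqref{nonincreasing},
\begin{equation*}
g(t)\le\delta K_\ell(e^t)=\frac{(m-2)^2}{4(2+\ell)(m-2)}\,K_\ell(e^t).
\end{equation*}
It therefore suffices to show, for each fixed $T$, the bound $K_\ell(e^T)e^{z(T)}<(2+\ell)(m-2)$.

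Fix $T$ and put $k=K_\ell(e^T)$. On $(-\infty,T]$ we have $g\ge k$, because $K_\ell$ is an infimum over $(0,e^T)$. I would then compare $z$ with the autonomous problem
\begin{equation*}
Z_{tt}+(m-2)Z_t+ke^{Z}=(2+\ell)(m-2).
\end{equation*}
Its equilibrium $Z_*$ satisfies $ke^{Z_*}=(2+\ell)(m-2)$, so the goal is $z(T)<Z_*$. Since $z$ starts at $-\infty$ with $z_t\to 2+\ell>0$, I would argue by contradiction: suppose $t_1\le T$ is the first time with $z(t_1)=Z_*$. On $(-\infty,t_1)$ we have $g e^{z}\ge k e^{z}$. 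I would use the Lyapunov function
\begin{equation*}
E=\tfrac12 z_t^2+ke^{z}-(2+\ell)(m-2)z .
\end{equation*}
Along solutions,
\begin{equation*}
\frac{dE}{dt}=-(m-2)z_t^2-(g-k)e^{z}z_t .
\end{equation*}
While $z_t\ge0$ this is nonpositive. Comparing with the limiting energy at $t=-\infty$ should show that $z$ cannot reach the level $Z_*$. In the Hardy-stable regime the equilibrium is a stable node, so trajectories approach it monotonically from below without overshooting, and this is where $m\ge10+4\ell$ enters.

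\emph{Step 2 (stability).} From Step 1, Hardy's inequality gives stability of every $v_\beta$ in the radial sense \eqref{eq:Euclidean-radial-stability}, in fact strictly.

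\emph{Step 3 (separation).} Suppose $\gamma<\beta$ and $r_1$ is the first zero of $w=v_\beta-v_\gamma$. Then $w$ solves
\begin{equation*}
-\Delta w=K\,\frac{e^{v_\beta}-e^{v_\gamma}}{v_\beta-v_\gamma}\,w
\end{equation*}
on $B_{r_1}$, with $w>0$ inside and $w=0$ on $\partial B_{r_1}$. The coefficient is at most $Ke^{v_\beta}<(m-2)^2/(4r^2)$. Testing the equation with $w$ and applying Hardy's inequality then gives a contradiction. Hence no two regular solutions intersect, which is structure (II).

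The main obstacle is Step 1, namely the no-overshoot property of $z$ for a nonautonomous $g$ that is only controlled from below by $K_\ell$ on $(-\infty,T]$. The nonpositivity of $dE/dt$ fails once $z_t<0$ and $g>k$. I would first try to prove $z_t>0$ on $(-\infty,T]$, using a Sturm comparison of the linearized flow with the critical node eigenvalue $-(m-2)/2$. If that fails, I would instead compare with the explicit singular solution via the maximum principle, as in \cite{B}.
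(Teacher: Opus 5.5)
The paper does not prove this statement; it quotes it from \cite{B}, so I am judging your outline on its own. The architecture is sound. Steps 2 and 3 are correct: at a first zero $r_1$ of $v_\beta-v_\gamma$ the coefficient is strictly below $Ke^{v_\beta}$, and Hardy's inequality on $H^1_0(B_{r_1})$ gives the contradiction. Your reduction of Step 1 to $K_\ell(e^T)e^{z(T)}<(2+\ell)(m-2)$ is also right, since $\delta(2+\ell)(m-2)=(m-2)^2/4$.

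The gap is the proof of this reduced inequality. Your Lyapunov argument cannot supply it, and not only because of the sign problem you flag. Since $z\to-\infty$ as $t\to-\infty$, the term $-(2+\ell)(m-2)z$ gives $E\to+\infty$ there. Meanwhile $z=Z_*$ is the minimum of the potential $ke^{z}-(2+\ell)(m-2)z$. A nonincreasing energy that starts at $+\infty$ never forbids reaching the bottom of the well. The node property of the linearization at $Z_*$ is local and does not exclude overshoot either.

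Your proposed repair, proving $z_t>0$ on $(-\infty,T]$, is false in general when $\delta>1$. Let $g\equiv k$ on $(-\infty,t_0]$ and let $g$ rise quickly and continuously to $\delta k$ right after $t_0$. Then $K_\ell\equiv k$ and the hypothesis holds. For $t_0$ large, $z(t_0)$ is near $Z_*(k)$ with $z_t(t_0)$ near $0$. After the rise, $z_{tt}\approx(2+\ell)(m-2)(1-\delta)<0$, so $z$ decreases.

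Your fallback is what closes Step 1, and it is just the Step~3 argument run against a singular solution. Fix $R>0$ and put $k=K_\ell(R)$; the hypothesis forces $k>0$, and $K(r)\ge kr^\ell$ on $(0,R)$. The function $V(r)=\log\frac{(2+\ell)(m-2)}{k}-(2+\ell)\log r$ satisfies $-\Delta V=kr^\ell e^{V}=(2+\ell)(m-2)r^{-2}$. Since $V(0^+)=+\infty$, the function $w=V-v_\beta$ is positive near $0$. If $r_1\le R$ were its first zero, then on $B_{r_1}$
\[
-\Delta w\le kr^\ell\bigl(e^{V}-e^{v_\beta}\bigr)<kr^\ell e^{V}w=\frac{(2+\ell)(m-2)}{r^2}\,w\le\frac{(m-2)^2}{4r^2}\,w ,
\]
and the last inequality is exactly $m\ge10+4\ell$. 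Because $|\nabla w|=O(r^{-1})$, $w=O(\log(1/r))$ and $m>2$, we have $w\in H^1_0(B_{r_1})$ and the boundary term at the origin vanishes. Testing with $w$ then contradicts Hardy's inequality. Hence $v_\beta<V$ on $(0,R]$, and the hypothesis at $r=R$ yields
\[
K(R)e^{v_\beta(R)}\le\delta kR^{\ell}e^{v_\beta(R)}<\delta kR^{\ell}e^{V(R)}=\frac{\delta(2+\ell)(m-2)}{R^2}=\frac{(m-2)^2}{4R^2}.
\]
The comparison needs Hardy's inequality, not a plain maximum principle, since the zeroth-order term has the wrong sign. With this in place of your energy argument, Steps 1--3 form a complete proof.
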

\begin{theorem}\cite[Proposition~5.1]{B19}\label{Bcritical}
Let $m=10+4\ell$ with $\ell>-2$. Assume that
$r^{-\ell}K(r)$ is nondecreasing on $(0,\infty)$ and
\begin{align*}
    \lim_{r\to\infty}r^{-\ell}K(r)=k_\infty
    \quad\text{for some }k_\infty>0.
\end{align*}
If
\begin{align*}
    \int_1^\infty
    \bigl(k_\infty-r^{-\ell}K(r)\bigr)
    r^{(m-4)/2}\,dr<\infty,
\end{align*}
then either {\rm (II)} or {\rm (III)} holds.
\end{theorem}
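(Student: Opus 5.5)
This is Theorem~\ref{Bcritical}, a result quoted from \cite{B19}; the paper uses it as an imported Euclidean result. Still, here is how I would prove it directly.

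\emph{Reduction to structure near the origin.} The ordering $v_\gamma<v_\beta$ for $\gamma<\beta$ near $r=0$ comes from the initial data. The claim ``either (II) or (III)'' amounts to two statements:
\begin{itemize}
\item[(a)] for all sufficiently negative $\beta,\gamma$, the solutions $v_\beta$ and $v_\gamma$ do not intersect;
\item[(b)] if some pair intersects, then the set of non-intersecting parameters is an interval $(-\infty,\beta^*]$.
\end{itemize}
I would handle (b) with the standard Bae--Ni argument. Nonintersection is equivalent to stability of $v_\beta$ (the analogue of Theorem~\ref{Bcor}, via the linearized function $\partial_\beta v_\beta$ having no zero). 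Stability is a closed condition in $\beta$, and it is inherited downward. So the substantive part is (a).

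\emph{Key step: small-$\beta$ limit.} Write $Q(r)=r^{-\ell}K(r)$. Since $Q$ is nondecreasing, $Q\le k_\infty$ everywhere. As $\beta\to-\infty$, $e^{v_\beta}$ is small, so $v_\beta$ stays near $\beta$ up to large radii and only feels $K$ at scale $\lambda=e^{-\beta/(2+\ell)}\to\infty$. Rescaling $w(s)=v_\beta(\lambda s)-\beta$ gives the weight $Q(\lambda s)s^\ell$, which converges to $k_\infty s^\ell$. The limit problem $w''+\frac{m-1}{s}w'+k_\infty s^\ell e^w=0$ with $m=10+4\ell$ is exactly critical. There the singular solution $-(2+\ell)\log s+c$ and the regular solutions approach it monotonically without crossing. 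The linearized solution $\partial_\beta$ of the limit behaves like $s^{-(m-2)/2}$ times a nonoscillating factor, since the discriminant of the indicial equation vanishes at $m=10+4\ell$.

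\emph{Main obstacle: perturbation at criticality.} Because the limit is exactly critical, the nonoscillation is borderline. The perturbation $k_\infty-Q$ could create oscillation, i.e.\ zeros of $\partial_\beta v_\beta$. I would work in Emden--Fowler variables $t=\log r$, with $v=-(2+\ell)t+\log\frac{2(m-2+\ell)... }{}$ plus a remainder. This turns the linearized equation into $\psi''+c\,\psi'+\frac{(m-2)^2}{4}(1+\epsilon(t))\psi=0$ with double root $-(m-2)/2$, where $\epsilon(t)\sim (k_\infty-Q(e^t))/k_\infty$. Removing the factor $e^{-(m-2)t/2}$ produces $y''+\frac{(m-2)^2}{4}\epsilon\,y$-type terms, and a Kneser/Hille-type nonoscillation criterion requires $\int^\infty t\,|\epsilon(t)|\,dt<\infty$ in a suitable weighted form. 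The hypothesis $\int_1^\infty (k_\infty-r^{-\ell}K)r^{(m-4)/2}dr<\infty$ is what I expect to control precisely this term after accounting for the Jacobian and the $r^{(m-2)/2}$ weight. Making this match rigorous, uniformly for very negative $\beta$, is the hardest part. I would first try a Sturm comparison with an explicit critical supersolution $r^{-(m-2)/2}(\log r+C)$, adjusted by the integral of $k_\infty-Q$. Monotonicity of $Q$ ensures $K\le k_\infty r^\ell$, so the comparison runs in the favorable direction.
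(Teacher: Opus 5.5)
There is a genuine gap. The paper imports this result from Bae's work rather than proving it, so your sketch has to stand on its own, and it does not. Step (a), stability of $v_\beta$ on all of $[0,\infty)$ for every $\beta$ below some threshold, is the whole content of the result, and you leave it as an expectation. The mechanism you propose for it is also set up incorrectly. Put $t=\log r$, $\kappa=(m-2)^2/4=(2+\ell)(m-2)$ (this identity is where $m=10+4\ell$ enters), $\tilde v(r)=\log\frac{\kappa}{k_\infty r^{2+\ell}}$ (the correct singular profile), $\epsilon(t)=1-Q(e^t)/k_\infty\ge0$, and $v_\beta(e^t)=\tilde v(e^t)+z(t)$. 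Then
\[
z''+(m-2)z'+\kappa\,(e^{z}-1)=\kappa\,\epsilon\,e^{z}.
\]
The potential in the linearized equation is therefore $\kappa(1-\epsilon)e^{z}$, not $\kappa(1+\epsilon)$. Its deviation from $\kappa$ is governed by $z$, that is, by the response of $v_\beta$ to the forcing $\kappa\epsilon e^{z}\ge0$, which pushes $v_\beta$ \emph{upward}.

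This is why ``$K\le k_\infty r^\ell$, so the comparison is favorable'' is not justified. A bound on $K$ does not bound $Ke^{v_\beta}$, and a nondecreasing $Q$ raises the effective exponent above the critical value $\ell$, which is the unfavorable direction. For nonincreasing $Q$, Theorem~\ref{Bthm1} with $\delta=1$ already gives (II), which is why the nondecreasing case needs an extra hypothesis at all. Two further problems: Kneser/Hille criteria only give finitely many zeros of $\partial_\beta v_\beta$, while Theorem~\ref{Bcor} requires none. And convergence on compact sets as $\beta\to-\infty$ says nothing about the tail, where the limiting linearized solution decays only like $s^{-(m-2)/2}$ up to a logarithm.

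The missing idea is a global upper barrier. If $v_\beta<\tilde v$ on $(0,\infty)$, then $Q\le k_\infty$ gives $Ke^{v_\beta}<\kappa r^{-2}$. Hardy's inequality $\int_0^\infty|\psi'|^2r^{m-1}\,dr\ge\kappa\int_0^\infty\psi^2r^{m-3}\,dr$ then yields \eqref{eq:Euclidean-radial-stability} directly, with no oscillation theory.

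Proving $v_\beta<\tilde v$ for $\beta\ll0$ is where the integral hypothesis is really used. The linear part of the $z$-equation has the double root $-(m-2)/2$, so the forcing acts through the kernel $(t-\sigma)e^{-(m-2)(t-\sigma)/2}$. At the linear level, its effect stays of the same order $t\,e^{-(m-2)t/2}$ as the intrinsic critical decay precisely when
\[
\int^\infty\epsilon(\sigma)e^{(m-2)\sigma/2}\,d\sigma=k_\infty^{-1}\int^\infty(k_\infty-Q(r))\,r^{(m-4)/2}\,dr<\infty.
\]
This is what the weight $r^{(m-4)/2}$ encodes; a Hille-type condition would only need $\int^\infty(k_\infty-Q(r))\frac{\log r}{r}\,dr<\infty$. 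After your scaling $\lambda=e^{-\beta/(2+\ell)}$, $\tilde v$ becomes exactly the singular solution of the limit problem, and the total weighted forcing tends to $0$ as $\lambda\to\infty$. What remains, and what your proposal does not do, is to show that this vanishing push cannot overturn the strictly negative gap between the limiting regular solution and the singular one, uniformly in $t$.

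Finally, in (b) you assert without proof that stability is inherited downward, and you omit the second clause of (III), that any two $\beta\ne\gamma>\beta^*$ intersect. Both follow from convexity. If $v_\gamma<v_\beta$, then $\phi=v_\beta-v_\gamma>0$ satisfies $-\Delta\phi=K(e^{v_\beta}-e^{v_\gamma})\ge Ke^{v_\gamma}\phi$, so $v_\gamma$ is stable; combine this with Theorem~\ref{Bcor}.
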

\begin{theorem}\cite[Corollary~6.1]{H2}\label{Bcor}
Let $\beta\in\RR$. Then $v_\beta$ is stable if and only if $v_\gamma(r)<v_\beta(r)$ for all $r\ge0$ and every $\gamma\in(-\infty,\beta)$.   
\end{theorem}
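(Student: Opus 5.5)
The plan is to pass through the linearized solution
\[
w_\beta(r):=\frac{\partial v_\beta}{\partial\beta}(r),
\]
which solves
\[
w''+\frac{m-1}{r}w'+K(r)e^{v_\beta(r)}w=0,\qquad w(0)=1,\qquad r^{m-1}w'(r)\to0 \text{ as } r\to0^+.
\]
Differentiability in $\beta$ and the behaviour at $0$ come from the integral form of the equation, together with $rK\in L^1(0,1)$ and $r^2K(r)\to0$. I will prove two equivalences:
\begin{enumerate}[label=(\alph*),leftmargin=*,nosep]
\item $v_\beta$ is stable if and only if $w_\beta>0$ on $[0,\infty)$;
\item $w_\beta>0$ on $[0,\infty)$ if and only if $v_\gamma<v_\beta$ on $[0,\infty)$ for every $\gamma<\beta$.
\end{enumerate}
Throughout, stability is tested only with radial functions, using the Euclidean form \eqref{eq:Euclidean-radial-stability} of Lemma~\ref{lem:radial-stability}.

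\textbf{Step (a).} Suppose first that $w_\beta>0$. For $\psi\in C_c^\infty((0,\infty))$ write $\psi=w_\beta\xi$. A Picone-type integration by parts, with no boundary terms since $\operatorname{supp}\psi$ is away from $0$, gives
\[
\int_0^\infty\bigl(|\psi'|^2-Ke^{v_\beta}\psi^2\bigr)s^{m-1}\,ds=\int_0^\infty w_\beta^2|\xi'|^2s^{m-1}\,ds\ge0 .
\]
Conversely, suppose $w_\beta$ has a first zero $r_1>0$. Take $\psi=w_\beta\mathbf 1_{[0,r_1]}$; its quadratic form equals $0$. Since $w_\beta'(r_1)\ne0$ by ODE uniqueness, a standard Sturm perturbation (extend slightly past $r_1$, or compare with a larger Dirichlet ball) produces a function with negative form. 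The remaining issue is that this function does not vanish near the origin. I will remove this with the logarithmic cutoff $\eta_\varepsilon$ from the proof of Lemma~\ref{lem:radial-stability}. Because $\psi$ is bounded near $0$ and $Ke^{v_\beta}s^{m-1}$ is integrable there, the form changes by $o(1)$ as $\varepsilon\to0$. Mollifying then gives a test function in $C_c^\infty((0,\infty))$ with negative form, so $v_\beta$ is unstable.

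\textbf{Step (b).} Suppose $v_\gamma<v_\beta$ for all $\gamma<\beta$. Then $w_\beta=\lim_{\gamma\uparrow\beta}(v_\beta-v_\gamma)/(\beta-\gamma)\ge0$. If $w_\beta(r_0)=0$ for some $r_0>0$, this would be an interior minimum, so $w_\beta'(r_0)=0$ and uniqueness would force $w_\beta\equiv0$, contradicting $w_\beta(0)=1$. Hence $w_\beta>0$.

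For the converse, assume $w_\beta>0$ and suppose, for a contradiction, that some $\gamma<\beta$ gives a first point $r_1$ with $d:=v_\beta-v_\gamma>0$ on $[0,r_1)$ and $d(r_1)=0$; then $d'(r_1)\le0$. Since $e^a-e^b<e^a(a-b)$ for $a>b$,
\[
\bigl(r^{m-1}(w_\beta d'-d\,w_\beta')\bigr)'=r^{m-1}K\,w_\beta\bigl(e^{v_\beta}d-(e^{v_\beta}-e^{v_\gamma})\bigr)>0\quad\text{on }(0,r_1).
\]
The bracket on the left tends to $0$ as $r\to0^+$, by the flux conditions at the origin. Integrating over $(0,r_1)$ therefore gives $r_1^{m-1}w_\beta(r_1)d'(r_1)>0$, which contradicts $d'(r_1)\le0$. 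So $v_\gamma<v_\beta$ on $[0,\infty)$, and combining (a) and (b) yields Theorem~\ref{Bcor}.

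I expect the main obstacle to be the instability direction of (a): the natural test function $w_\beta\mathbf 1_{[0,r_1]}$ does not vanish at the origin, and $K$ may be singular there. The cutoff estimate \eqref{eq:pole-cutoff-energy} handles $m\ge3$, and the $L^1$ bound on $rK$ controls the potential term. A secondary technical point is justifying differentiability of $v_\beta$ in $\beta$ and the boundary behaviour at $0$; I will obtain both from the integral equation.
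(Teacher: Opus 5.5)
Your proof is correct. The paper gives no argument for this statement: it is quoted from \cite[Corollary~6.1]{H2} and used only as a black box in Section~3, so there is no in-paper proof to compare against.

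Your route has three ingredients. The first is the linearized solution $w_\beta=\partial_\beta v_\beta$. The second is a Picone identity together with a Sturm-type perturbation, which gives ``stable $\Leftrightarrow w_\beta>0$''. The third is the Wronskian $r^{m-1}(w_\beta d'-d\,w_\beta')$ with $d=v_\beta-v_\gamma$, combined with the convexity inequality $e^a-e^b<e^a(a-b)$ for $a>b$, which gives ``$w_\beta>0 \Leftrightarrow$ ordering''. This is the standard mechanism behind criteria of this kind. Its advantage is that it is self-contained and uses only the standing hypotheses of the appendix ($K>0$ continuous, $rK\in L^1(0,1)$, $r^2K\to0$), plus the radial reduction \eqref{eq:Euclidean-radial-stability} and the cutoff $\eta_\varepsilon$ of Lemma~\ref{lem:radial-stability}. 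It therefore visibly covers every weight produced by the transformation $\phi$, including the non-power weight for $n=2$. The citation buys only brevity.

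The one step worth writing out is the ``standard Sturm perturbation''. Let $B$ be the bilinear form of $Q(\psi)=\int_0^\infty(|\psi'|^2-Ke^{v_\beta}\psi^2)s^{m-1}\,ds$. Take $\psi=w_\beta\mathbf 1_{[0,r_1]}$ and $\phi\in C_c^\infty((0,\infty))$ with $\phi(r_1)\ne0$. Integrating by parts on $(0,r_1)$, where $\phi$ vanishes near $0$, gives $B(\psi,\phi)=r_1^{m-1}w_\beta'(r_1)\phi(r_1)\ne0$. Hence $Q(\psi+t\phi)=2tB(\psi,\phi)+t^2Q(\phi)<0$ for small $t$ of suitable sign. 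Your cutoff-and-mollify step then applies, since $\psi+t\phi$ is bounded near the origin.

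A minor remark: the logarithmic cutoff estimate holds with weight $s^{m-1}$ for every $m\ge2$, not only $m\ge3$, though $m\ge3$ is all you need here.
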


\end{document}